\newtheorem{thm}{Theorem}[section]
\newtheorem{lem}[thm]{Lemma}
\newtheorem{prop}[thm]{Proposition}
\theoremstyle{definition}
\newtheorem{defn}[thm]{Definition}
\theoremstyle{remark}
\newtheorem{rem}[thm]{Remark}
\newtheorem*{ntt}{Notation}
\numberwithin{equation}{section}
\newcommand{\N}{\mathbb{N}}      
\newcommand{\Z}{\mathbb{Z}}      
\newcommand{\Q}{\mathbb{Q}}      
\newcommand{\R}{\mathbb{R}}      
\newcommand{\eps}{\varepsilon}   
\newcommand\restr[2]{{           
  \left.\kern-\nulldelimiterspace #1%
  \right|_{#2}%
 }}
\newcommand\minus{               
  \setbox0=\hbox{-}%
  \vcenter{%
    \hrule width\wd0 height \the\fontdimen8\textfont3%
  }%
}
\newcommand{\tr}{\mathrm{tr}}    
\newcommand{\id}{\mathrm{id}}    
\newcommand{\SL}{\mathrm{SL}}    
\newcommand{\bigslant}[2]{       
  {\raisebox{.2em}{$#1$} \left/ \raisebox{-.2em}{$#2$} \right.}}
\tikzset{                        
    symbol/.style={%
        ,draw=none
        ,every to/.append style={%
            edge node={node [sloped, allow upside down, auto=false]{$#1$}}}
    }
}
\newcommand{\negphantom}[1]{\settowidth{\dimen0}{#1}\hspace*{-\dimen0}}
\newcommand{\Hmo}{\mathrm{Homeo}}
\newcommand{\Hmot}{\mathrm{H}\widetilde{\mathrm{omeo}}}
\newcommand{\SLt}{\widetilde{\mathrm{SL}}}
\newcommand{\PSL}{\mathrm{PSL}}
\newcommand{\bbS}{\mathbb{S}}
\newcommand{\Cl}{\mathrm{C}\ell}
\setlist[itemize]{leftmargin=*}
\setlist[enumerate]{leftmargin=*}
\def\@tocline#1#2#3#4#5#6#7{\relax
  \ifnum #1>\c@tocdepth 
  \else
    \par \addpenalty\@secpenalty\addvspace{#2}%
    \begingroup \hyphenpenalty\@M
    \@ifempty{#4}{%
      \@tempdima\csname r@tocindent\number#1\endcsname\relax
    }{%
      \@tempdima#4\relax
    }%
    \parindent\z@ \leftskip#3\relax \advance\leftskip\@tempdima\relax
    \rightskip\@pnumwidth plus4em \parfillskip-\@pnumwidth
    #5\leavevmode\hskip-\@tempdima
      \ifcase #1
       \or\or \hskip 1em \or \hskip 2em \else \hskip 3em \fi%
      #6\nobreak\relax
    \dotfill\hbox to\@pnumwidth{\@tocpagenum{#7}}\par
    \nobreak
    \endgroup
  \fi}
\begin{document}

\title{Classification of the conjugacy classes of \texorpdfstring{$\widetilde{\mathrm{SL}}_2(\mathbb{R})$}{SL2\~{}(R)}}%
\author{Christian T\'afula}%
\address{D\'epartment de Math\'ematiques et Statistique, %
 Universit\'e de Montr\'eal, %
 CP 6128 succ Centre-Ville, %
 Montreal, QC H3C 3J7, Canada}%
\email{christian.tafula.santos@umontreal.ca}%

\subjclass[2020]{Primary 20E45; Secondary 22E46}%
\keywords{$\mathrm{SL}(2, \mathbb{R})$, universal covering group, conjugacy classes}%

\begin{abstract}
 In this note, we classify the conjugacy classes of $\widetilde{\mathrm{SL}}_2(\mathbb{R})$, the universal covering group of $\mathrm{PSL}_2(\mathbb{R})$. For any non-central element $\alpha \in \widetilde{\mathrm{SL}}_2(\mathbb{R})$, we show that its conjugacy class may be determined by three invariants:
 \begin{enumerate}[label=(\roman*)]
  \item \underline{Trace}: the trace (valued in the set of positive real numbers $\mathbb{R}_{+}$) of its image $\overline{\alpha}$ in $\mathrm{PSL}_2(\mathbb{R})$;

  \item \underline{Direction type}: the sign behavior of the induced self-homeomorphism of $\mathbb{R}$ determined by the lifting $\widetilde{\mathrm{SL}}_2(\mathbb{R}) \curvearrowright \R$ of the action $\mathrm{PSL}_2(\mathbb{R}) \curvearrowright \mathbb{S}^{1}$;

  \item \underline{The function $\ell^{\sharp}$}: a conjugacy invariant length function introduced by S. Mochizuki [Res. Math. Sci. {\bf{3}} (2016), 3:6].
 \end{enumerate}
\end{abstract}
\maketitle

\section{Introduction}
 Our goal in this note is to classify the conjugacy classes of the universal covering group of $\PSL_2(\R)$, namely, $\SLt_2(\R)$ --- one of the eight Thurston model geometries that appear in the geometrization of $3$-manifolds. Based on the group-theoretic proof of Szpiro's theorem as described by Zhang \cite{zhan01} (originally due to Bogomolov et al. \cite{amobogkatpan00}), we start by identifying $\PSL_2(\R)$ with a subgroup of $\Hmo_{+}(\bbS^1)$, the group of orientation-preserving self-homeomorphisms of $\bbS^1$. This is done via the faithful continuous action induced by $\PSL_2(\R)$ on $\bbS^1$. Then, realizing the universal covering group $\Hmot_{+}(\bbS^1)$ as a subgroup of the group of increasing self-homeomorphisms $\Hmo_{+}(\R)$ of $\R$, we obtain the central extension
 \begin{equation*}
  \begin{tikzcd}
   1\arrow[r] & \Z\arrow[r] & \Hmot_{+}(\bbS^1)\arrow[r] & \Hmo_{+}(\bbS^1)\arrow[r] & 1,
  \end{tikzcd}
 \end{equation*}
 which allows us to construct $\SLt_2(\R)$ by lifting $\PSL_2(\R)$ into the larger group. We show that the conjugacy class of an element $\alpha \in \SLt_2(\R)$ is determined uniquely by some $\mathcal{T}_n \in Z\big(\Hmot_{+}(\bbS^1)\big)$ together with the \emph{canonical lifting} (as defined in Subsection \ref{canoli}) of a certain representative of the conjugacy class of $\overline{\alpha} \in \PSL_2(\R)$.
 
\subsection{Conjugacy classes of \texorpdfstring{$\PSL_2(\R)$}{PSL2(R)}}\label{introtbl}
 By Iwasawa decomposition, every $\begin{bsmallmatrix} a & b \\ c & d \end{bsmallmatrix} \in \PSL_2(\R)$ has a unique representation as $\begin{bsmallmatrix} a & b \\ c & d \end{bsmallmatrix} = \rho_{\vartheta}\mathrm{a}_{\lambda}\mathrm{u}_{x}$, where $\rho_{\vartheta} \in K$, $\mathrm{a}_{\lambda}\in A$, and $\mathrm{u}_{x}\in N$, where
 \begin{align*}
  K &:= \left\{ \rho_{\vartheta} := \begin{bmatrix} \cos(\vartheta) & -\sin(\vartheta) \\ \sin(\vartheta) & \cos(\vartheta) \end{bmatrix} ~\big|~ 0 \leq \vartheta < \pi \right\}, \\
  A &:= \left\{ \mathrm{a}_{\lambda} := \begin{bmatrix} \lambda & 0 \\ 0 & 1/\lambda \end{bmatrix} ~\big|~ \lambda > 0 \right\}, \\
  N &:= \left\{ \mathrm{u}_{x} := \begin{bmatrix} 1 & x \\ 0 & 1 \end{bmatrix} ~\big|~ x \in \R \right\},
 \end{align*}
 Moreover, write $\overline{\tr}(\begin{bsmallmatrix} a & b \\ c & d \end{bsmallmatrix}) := |a+d|$ for $\begin{bsmallmatrix} a & b \\ c & d \end{bsmallmatrix} \in\PSL_2(\R)$, and $\overline{\mathrm{Id}} := \begin{bsmallmatrix} 1 & 0 \\ 0 & 1 \end{bsmallmatrix}$. Then:
 \begin{itemize}
  \item If $\overline{\tr}(\begin{bsmallmatrix} a & b \\ c & d \end{bsmallmatrix}) > 2$, then there is a unique $\lambda > 1$ for which $\begin{bsmallmatrix} a & b \\ c & d \end{bsmallmatrix} \sim \mathrm{a}_{\lambda}$;
  \item If $\overline{\tr}(\begin{bsmallmatrix} a & b \\ c & d \end{bsmallmatrix}) = 2$, then $\begin{bsmallmatrix} a & b \\ c & d \end{bsmallmatrix}$ is conjugate to either $\overline{\mathrm{Id}}$, $\mathrm{u}_1$ or $\mathrm{u}_{-1}$;
  \item If $\overline{\tr}(\begin{bsmallmatrix} a & b \\ c & d \end{bsmallmatrix}) < 2$, then there is a unique $0 < \vartheta < \pi$ for which $\begin{bsmallmatrix} a & b \\ c & d \end{bsmallmatrix} \sim \rho_{\vartheta}$.
 \end{itemize}
 
 Denoting by $\widetilde{\rho}_{\vartheta}$, $\widetilde{\mathrm{a}}_{\lambda}$, $\widetilde{\mathrm{u}}_x$ the canonical liftings of $\rho_{\vartheta}$, $\mathrm{a}_{\lambda}$, $\mathrm{u}_x$ in the sense of Subsection \ref{canoli}, we will show the following:
 
 \noindent
 \renewcommand{\arraystretch}{1.5}
 \begin{longtable}[c]{|c|c|c||c|}
  \cline{1-4}
  \multirow{2}{*}{\textnormal{\textbf{Trace}}} & \multirow{2}{*}{$\begin{array}{@{}c@{}} \textbf{Direction} \\[-.5em] \textbf{type} \end{array}$} & \multirow{2}{*}{$\ell^{\sharp}$} & \multirow{2}{*}{$\begin{array}{@{}c@{}} \textbf{Conjugacy} \\[-.1em] \textbf{class in } \SLt_2(\R) \end{array}$} \\*
  & & & \\
  \hline \endhead \hline\hline
  \multirow{4}{*}{$\overline{\tr} < 2$} & \multirow{2}{*}{Forward} & $[(0,1)]$ & $[\widetilde{\rho}_{\vartheta}]$ \\* \cline{3-4}
  & & $[(n,n+1)]$ & $[\mathcal{T}_{n}\,\widetilde{\rho}_{\vartheta}]$ \\* \cline{2-4}
  & \multirow{2}{*}{Backward} & $[(0,1)]$ & $[\mathcal{T}_{-1}\,\widetilde{\rho}_{\vartheta}]$ \\ \cline{3-4}
  & & $[(n,n+1)]$ & $[\mathcal{T}_{-1-n}\,\widetilde{\rho}_{\vartheta}]$ \\ \cline{2-4}
  \hline\hline 

  \multirow{6}{*}{$\begin{array}{@{}c@{}} \overline{\tr} = 2 \\ (\overline{\alpha} \neq \id_{\bbS^1}) \end{array}$} & \multirow{2}{*}{Forward} & $[n]$ & $[\mathcal{T}_{n}\,\widetilde{\mathrm{u}}_1]$ \\* \cline{3-4}
  & & $[(n,n+1)]$ & $[\mathcal{T}_{n}\,\widetilde{\mathrm{u}}_{-1}]$ \\* \cline{2-4}
  & Semi-forward & $[(0,1)]$ & $[\widetilde{\mathrm{u}}_{-1}]$ \\* \cline{2-4}
  & Semi-backward & $[(0,1)]$ & $[\widetilde{\mathrm{u}}_1]$ \\* \cline{2-4}
  & \multirow{2}{*}{Backward} & $[n]$ & $[\mathcal{T}_{-n}\, \widetilde{\mathrm{u}}_{-1}]$ \\* \cline{3-4}
  & & $[(n,n+1)]$ & $[\mathcal{T}_{-n}\,\widetilde{\mathrm{u}}_1]$ \\*
  \hline\hdashline

  \multicolumn{4}{:c:}{$\begin{array}{c:c:c::c}
  \multirow{3}{*}{$\hspace{-.635em} \begin{array}{@{}c@{}} \overline{\alpha}= \id_{\bbS^1} \\ \textnormal{(}\alpha \textit{ central}\textnormal{)} \end{array} ~\,\hspace{-.635em}$} & \text{Forward} & [n] & \ [\mathcal{T}_n] \\ \cdashline{2-4}
  & \hspace{1.6em}\textit{Identity}\hspace{1.6em} & \hspace{1.835em}[0]\hspace{1.835em} & \hspace{2.19em}\ [\id_{\R}]\hspace{2.19em} \\ \cdashline{2-4}
  & \text{Backward} & [n] & \ [\mathcal{T}_{-n}] \end{array}$} \\
  \hdashline\hline

  \multirow{3}{*}{$\overline{\tr} > 2$} & Forward & $[(n,n+1)]$ & $[\mathcal{T}_{n}\, \widetilde{\mathrm{a}}_{\lambda}]$ \\* \cline{2-4}
  & Alternating & $[(0,1)]$ & $[\widetilde{\mathrm{a}}_{\lambda}]$ \\* \cline{2-4}
  & Backward & $[(n,n+1)]$ & $[\mathcal{T}_{-n}\, \widetilde{\mathrm{a}}_{\lambda}]$ \\*
  \cline{1-4}
 \end{longtable}

 The three invariants above may be briefly described as follows:
 
 \begin{enumerate}[label=(\roman*)]
  \item \underline{Trace}: $\overline{\tr}(\overline{\alpha}) := |a+d|$, where $\begin{bsmallmatrix} a & b \\ c & d \end{bsmallmatrix} = \overline{\alpha} \in \PSL_2(\R)$ is the image of $\alpha \in\SLt_2(\R)$ via the natural quotient map; \medskip

  \item \underline{Direction type}: Elements $\alpha \in \Hmot_{+}(\bbS^1)$ will be constructed in such a way that the function $\R\ni x\mapsto\alpha(x)-x\in\R$ is invariant with respect to the translation $\R\ni x \mapsto x+1\in\R$. The sign behavior of $\alpha(x)-x$ can be classified as follows: strictly positive (the \emph{forward} direction type), strictly negative (the \emph{backward} direction type), positive but not strictly positive (the \emph{semi-forward} direction type), negative but not strictly negative (the \emph{semi-backward} direction type), or alternating in sign (the \emph{alternating} direction type). 
  \medskip

  \item \underline{The function $\ell^{\sharp}$}: As was mentioned above, $\alpha(x)-x$ is invariant under translation by $1$; hence, in order to determine the behavior of $\alpha$, it suffices to restrict our attention to $x\in [0,1]$, which is compact.  In particular, $\ell(\alpha) := \sup_{x\in[0,1]} |\alpha(x) - x|$ is  a well-defined real number. We then define the \emph{$\ell^{\sharp}$} function by taking $\ell^{\sharp}(\alpha) := \{\lfloor \ell(\alpha)\rfloor, \lceil \ell(\alpha) \rceil\} \subseteq \N$.
 \end{enumerate}

 \noindent
 The direction type and $\ell^{\sharp}$ function are invariant under conjugation in $\Hmot_{+}(\bbS^1)$ (Lemma \ref{dirinv} and Proposition \ref{lcinv}, respectively). The $\ell^{\sharp}$ function, which we describe in Subsection \ref{lsharp}, is based on the exposition of Bogomolov's proof of Szpiro's theorem by Mochizuki \cite{moc16}.
 
\begin{ntt}
 Let $G$ be a group. Given $g_1,g_2\in G$, we say that $g_1$ is \emph{conjugate} to $g_2$ if there exists $h\in G$ such that $g_1 = h g_2 h^{-1}$, in which case we write $g_1 \sim g_2$. Conjugacy is an equivalence relation; we denote the \emph{conjugacy class} of $g\in G$ by $[g] := \{ h\in G ~|~ g\sim h \}$ and the set of all conjugacy classes of $G$ by $\Cl(G)$. For any two elements $g,h\in G$, we denote their \emph{commutator} by $[g,h] := ghg^{-1}h^{-1}$.
\end{ntt}

\section{Realizing \texorpdfstring{$\SLt_2(\R)$}{SL2\~{}(R)}}
 
 We start by briefly recalling the construction of $\SLt_2(\R)$ via the faithful continuous action $\PSL_2(\mathbb{R}) \curvearrowright \bbS^1$. For a deeper exposition, cf. Ghys \cite{ghy01}.
 
\subsection{\texorpdfstring{$\PSL_2(\R)$}{PSL2(R)} action on the circle}\label{sec2}
 Consider the following basic lemma in topology:

 \begin{lem}\label{ymlm}
  Let $X,Y,Z$ be topological spaces, with $Y$ locally compact Hausdorff, and let $S \subseteq \mathcal{C}(Y,Z)$ be a set of continuous maps endowed with the compact-open topology. Given a map $\phi:X\to S$, the following are equivalent:
  \begin{enumerate}[label=\textnormal{(\roman*)}]
   \item $\phi:X \ni x \mapsto \phi_x \in S$ is continuous;

   \item The map $\Phi: X\times Y \ni (x,y) \mapsto \phi_{x}(y) \in Z$ is continuous.
  \end{enumerate}
 \end{lem}
%
%

 In our case, the natural action of $\PSL_2(\R)$ on $\mathbb{P}_{\R}^1$ is given by homographies -- i.e., M\"obius transformations
 \[ \PSL_{2}(\R) \times \mathbb{P}_{\R}^1 \ni \left(\begin{bmatrix} a & b \\ c & d\end{bmatrix}, (z:1)\right) \longmapsto \bigg(\frac{az+b}{cz+d} : 1\bigg) \in \mathbb{P}_{\R}^1, \]
 which are continuous. The real projective line can then be identified with $\R/\Z\simeq \bbS^1$. Thus, Lemma \ref{ymlm} gives us continuous map $\PSL_2(\R) \rightarrow \Hmo_{+}(\bbS^1)$, which amounts to a continuous action $\PSL_2(\R) \curvearrowright \bbS^1$. The fact that this action is faithful follows from the fact that if $M,N\in \SL_2(\R)$ maps 1-dimensional subspaces to the same 1-dimensional subspace, then $MN^{-1}$ is a homothety from the origin, i.e., of the form $\begin{psmallmatrix} \lambda & 0 \\ 0 & \lambda \end{psmallmatrix}$ for some $\lambda \in \R$. Since $\det(MN^{-1}) = 1$, this implies $\lambda = \pm 1$, and thus $\overline{M} = \overline{N} \in \PSL_2(\R)$. It follows that the map $\PSL_2(\R) \hookrightarrow \Hmo_{+}(\bbS^1)$ is an embedding of topological groups.
 
 \FloatBarrier
 \begin{figure}[!htb]
 \centering
 \begin{minipage}{0.32\textwidth}\begin{center}
  \begin{tikzpicture}[scale=0.4, every node/.style={scale=0.4}]
   \draw[black] (0,-0.02) node {\large{$\bullet$}};
   \draw[black] (4.5,-4.5) node {\resizebox{1cm}{.8cm}{\Huge{$K$}}};

   \draw[->, dashed, black] (0,-5)--(0,5);
   \draw[->, dashed, black] (-5,0)--(5,0);

   \draw[black] (0,0) circle (4cm);
   \draw[black] (4,-0.02) node {\Huge{$\circ$}};

   \draw[->, thick, black] (4.48, 0.4) to[out=95,in=-32.5] (2.42, 3.80);
   \draw[->, thick, black] (2.08, 3.99) to[out=152.5,in=27.5] (-2.08, 3.99);
   \draw[->, thick, black] (-2.42, 3.80) to[out=212.5,in=85] (-4.48, 0.4);
   \draw[->, thick, black] (-4.48, -0.4) to[out=-85,in=-212.5] (-2.42, -3.80);
   \draw[->, thick, black] (-2.08, -3.99) to[out=-27.5,in=-152.5] (2.08, -3.99);
   \draw[->, thick, black] (2.42, -3.80) to[out=32.5,in=-95] (4.48, -0.4);
  \end{tikzpicture}
 \end{center}\end{minipage}
 \begin{minipage}{0.32\textwidth}\begin{center}
  \begin{tikzpicture}[scale=0.4, every node/.style={scale=0.4}]
   \draw[black] (0,-0.02) node {\large{$\bullet$}};
   \draw[black] (4.5,-4.5) node {\resizebox{.9cm}{.8cm}{\Huge{$A$}}};

   \draw[->, dashed, black] (0,-5)--(0,5);
   \draw[->, dashed, black] (-5.5,0)--(5.5,0);

   \draw[black] (0,0) circle (4cm);
   \draw[black] (4,-0.02) node {\Huge{$\circ$}};

   \draw[thick, black] (4.5,0) node {\resizebox{1.5cm}{1.2cm}{\Huge{$\mathbf{\times}$}}};
   \draw[thick, black] (-4.5,0) node {\resizebox{1.5cm}{1.2cm}{\Huge{$\mathbf{\times}$}}};

   \draw[->, thick, black] (3.99, 2.08) to[out=-62.5,in=95] (4.48, 0.4);
   \draw[->, thick, black] (-2.08, 3.99) to[out=27.5,in=122.5] (3.80, 2.42);
   \draw[->, thick, black] (-4.48, 0.4) to[out=85,in=212.5] (-2.42, 3.80);
   \draw[->, thick, black] (-4.48, -0.4) to[out=-85,in=-212.5] (-2.42, -3.80);
   \draw[->, thick, black] (-2.08, -3.99) to[out=-27.5,in=-122.5] (3.80, -2.42);
   \draw[->, thick, black] (3.99, -2.08) to[out=62.5,in=-95] (4.48, -0.4);
  \end{tikzpicture}
 \end{center}\end{minipage}
 \begin{minipage}{0.32\textwidth}\begin{center}
  \begin{tikzpicture}[scale=0.4, every node/.style={scale=0.4}]
   \draw[black] (0,-0.02) node {\large{$\bullet$}};
   \draw[black] (4.5,-4.5) node {\resizebox{1cm}{.8cm}{\Huge{$N$}}};

   \draw[->, dashed, black] (0,-5)--(0,5);
   \draw[->, dashed, black] (-5,0)--(5.5,0);

   \draw[black] (0,0) circle (4cm);
   \draw[black] (4,-0.02) node {\Huge{$\circ$}};

   \draw[thick, black] (4.5,0) node {\resizebox{1.5cm}{1.2cm}{\Huge{$\mathbf{\times}$}}};

   \draw[->, thick, black] (3.99, 2.08) to[out=-62.5,in=95] (4.48, 0.4);
   \draw[->, thick, black] (0.4, 4.48) to[out=-5,in=122.5] (3.80, 2.42);
   \draw[->, thick, black] (-4.48, 0.4) to[out=85,in=185] (-0.4, 4.48);
   \draw[->, thick, black] (-0.4, -4.48) to[out=-185,in=-85] (-4.48, -0.4);
   \draw[->, thick, black] (3.80, -2.42) to[out=-122.5,in=5] (0.4, -4.48);
   \draw[->, thick, black] (4.48, -0.4) to[out=-95,in=62.5] (3.99, -2.08);
  \end{tikzpicture}
 \end{center}\end{minipage}
  \captionsetup{singlelinecheck=off}
  \caption{Illustration of how generic elements from each of the groups from the Iwasawa decomposition of $\PSL_2(\R)$ act on $\bbS^1$. The circles represent $\bbS^1$ in the usual counter-clockwise orientation; the arrows indicate, roughly, the direction and length of the rate of change in the sector below it; and ``$\times$'' indicates fixed points.}
 \label{fig3}
 \end{figure}

\subsection{Universal covering of \texorpdfstring{$\Hmo_{+}(\bbS^1)$}{Homeo\_+(S\^{}1)}}
 This group may be described as
 \begin{gather*}
  \Hmot_{+}(\bbS^1) := \big\{ \alpha\in \Hmo(\R) ~\big|~ \alpha(x+1) = \alpha(x)+1 \text{ for all } x\in\R \big\}.
 \end{gather*}
 Each $\alpha \in \Hmot_{+}(\bbS^1)$ defines a self-homeomorphism $\overline{\alpha} \in \Hmo_{+}(\bbS^1)$ via $\overline{\alpha}(\overline{x}) := \overline{\alpha(x)}$. The simplest type of elements in $\Hmot_{+}(\bbS^1)$ are the \emph{translations} $\mathcal{T}_u$ ($u\in \R$), defined by
 \begin{equation}
  \forall x \in \R,~ \mathcal{T}_{u}(x) := x + u. \label{dftransl}
 \end{equation}
 Since the kernel of $\Hmot_{+}(\bbS^1) \ni \alpha \mapsto \overline{\alpha} \in \Hmo_{+}(\bbS^1)$ consists of the subgroup $\{\mathcal{T}_{n}\}_{k \in \Z}$, which is discrete, this map is indeed a covering morphism. Moreover:
 
 \begin{lem}\label{auttcent}
  $Z\big(\Hmot_{+}(\bbS^1)\big) = \{\mathcal{T}_n\}_{n\in \Z}$ \textnormal{($\simeq \Z$)}.
 \end{lem}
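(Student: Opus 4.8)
The plan is to prove the two inclusions separately; one is immediate and the other reduces to a small explicit construction.

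First I would verify that $\{\mathcal{T}_k \mid k\in\Z\}\subseteq Z\big(\Hmot_{+}(\bbS^1)\big)$. Iterating the defining relation $\alpha(x+1)=\alpha(x)+1$ gives $\alpha(x+k)=\alpha(x)+k$ for every $\alpha\in\Hmot_{+}(\bbS^1)$ and $k\in\Z$, hence $\alpha\,\mathcal{T}_k(x)=\alpha(x+k)=\alpha(x)+k=\mathcal{T}_k\,\alpha(x)$, so $\mathcal{T}_k$ is central. Since $k\mapsto\mathcal{T}_k$ is an injective homomorphism ($\mathcal{T}_j\mathcal{T}_k=\mathcal{T}_{j+k}$, and $\mathcal{T}_k=\id_\R$ only for $k=0$) onto this subgroup, we also get the asserted isomorphism with $\Z$.

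For the reverse inclusion, let $\beta\in Z\big(\Hmot_{+}(\bbS^1)\big)$. Every translation $\mathcal{T}_u$ with $u\in\R$ lies in $\Hmot_{+}(\bbS^1)$ (it trivially satisfies $\mathcal{T}_u(x+1)=\mathcal{T}_u(x)+1$), so $\beta$ commutes with all of them; that is, $\beta(x+u)=\beta(x)+u$ for all $x,u\in\R$. Setting $x=0$ shows $\beta=\mathcal{T}_{\beta(0)}$ is itself a translation. It remains to show $u:=\beta(0)\in\Z$. Writing $u=n+r$ with $n=\lfloor u\rfloor\in\Z$ and $r\in[0,1)$, and using that $\mathcal{T}_n$ is central by the first part, $\mathcal{T}_r=\beta\,\mathcal{T}_{-n}$ is again central (a product of central elements); so it suffices to rule out $0<r<1$.

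The only real step is therefore: for each $r\in(0,1)$, exhibit an $\alpha\in\Hmot_{+}(\bbS^1)$ that does not commute with $\mathcal{T}_r$. I would take $\alpha$ to be the piecewise-linear homeomorphism of $\R$ which on $[0,1]$ sends $0\mapsto 0$, $r\mapsto r/2$, $1\mapsto 1$ and is affine on $[0,r]$ and $[r,1]$, extended to all of $\R$ by $\alpha(x+1)=\alpha(x)+1$. Both slopes are positive, so $\alpha$ is a genuine strictly increasing self-homeomorphism of $\R$ commuting with $\mathcal{T}_1$, i.e. $\alpha\in\Hmot_{+}(\bbS^1)$. Then $\alpha\,\mathcal{T}_r(0)=\alpha(r)=r/2\neq r=\alpha(0)+r=\mathcal{T}_r\,\alpha(0)$, contradicting the centrality of $\mathcal{T}_r$. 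Hence $r=0$, so $\beta=\mathcal{T}_n$.

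I do not expect a genuine obstacle here: the whole content is the last-step construction together with the reduction modulo $\Z$ (which is what lets us assume $0<r<1$, so that $r/2\neq r$ gives the desired failure of commutativity). As an alternative route, one could deduce the statement from triviality of $Z\big(\Hmo_{+}(\bbS^1)\big)$, using that $\beta\mapsto\overline{\beta}$ is surjective with kernel $\{\mathcal{T}_k\mid k\in\Z\}$ (Lemma~\ref{ucex}), so that $\beta$ central forces $\overline{\beta}=\id_{\bbS^1}$; but establishing $Z\big(\Hmo_{+}(\bbS^1)\big)=\{\id\}$ needs essentially the same construction pushed down to $\bbS^1$, so the direct argument above is the shorter one.
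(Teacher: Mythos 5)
Your proof is correct, but it takes a genuinely different route from the paper's. The paper's argument uses the smooth test functions $\alpha_q(x) = x + \sin(2\pi qx)/(2\pi q)$: commuting a central element $\zeta$ with $\alpha_1$ forces $\zeta(0)\in\Z$, then commuting the normalization $\xi = \mathcal{T}_{-\zeta(0)}\zeta$ with each $\alpha_q$ forces $\xi(p/q)=p/q$ for all rationals, and one concludes by density and continuity. Your argument instead exploits the observation that \emph{all} real translations $\mathcal{T}_u$, $u\in\R$, lie in $\Hmot_{+}(\bbS^1)$, so a central $\beta$ satisfies $\beta(x+u)=\beta(x)+u$ for all $x,u$ and is therefore itself a translation $\mathcal{T}_{\beta(0)}$; this collapses the problem to ruling out $\mathcal{T}_r$ for $r\in(0,1)$, which your piecewise-linear test function does in one line ($\alpha(r)=r/2\neq r$). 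All the steps check out: the piecewise-linear $\alpha$ has positive slopes on $[0,r]$ and $[r,1]$ and extends $1$-periodically since $\alpha(0)=0$, $\alpha(1)=1$, so it is a legitimate element of $\Hmot_{+}(\bbS^1)$, and the reduction to $r\in[0,1)$ via centrality of $\mathcal{T}_{-n}$ is sound. Your version is shorter and avoids the density argument entirely; the paper's version has the minor advantage that its test functions are smooth, so the identical computation also determines the center of the corresponding group of lifted diffeomorphisms, whereas your final test function would need to be smoothed to serve that purpose.
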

 
 We include a short proof for the sake of completeness.
 
 \begin{proof}
  We will show if $\zeta \in Z\big(\Hmot_{+}(\bbS^1)\big)$ then $\zeta = \mathcal{T}_M$ for some $M\in \Z$. For each $q \in \Z_{\geq 1}$ and $x \in \R$, let
  \begin{equation*}
   \alpha_{q}(x) := x + \frac{|\sin(\pi q x)|}{\pi q}.
  \end{equation*}
  One checks that $\alpha_q$ is strictly increasing, since its first (left or right) derivative is never negative and has a discrete zero set; moreover, $\alpha_q(x+1) = \alpha_q(x)+1$ for all $x\in\R$, and so $\alpha_q \in \Hmot_{+}(\bbS^1)$. Since $\zeta$ is in the center, we have
  \begin{align*}
   \zeta(\alpha_1(0)) = \alpha_1(\zeta(0)) &\implies \zeta(0) = \zeta(0) + \frac{|\sin(\pi \zeta(0))|}{\pi} \\
   &\implies |\sin(\pi \zeta(0))| = 0 \\
   &\implies \zeta(0) \in \Z.
  \end{align*}

  We claim that $\zeta = \mathcal{T}_{M}$ for $M := \zeta(0)$. To this end, let $\xi(x) := \zeta(x) - M$. It suffices to show that $\xi$ is the identity when restricted to $\Q$, for $\Q$ is dense in $\R$ and $\xi$ is continuous. Fixing $q\geq 1$, for every $p \in \{0, 1,\ldots, q-1\}$ we have
  \begin{align*}
   \alpha_q\xi(p/q) = \xi\alpha_q(p/q) &\implies  \xi(p/q) + \frac{|\sin(\pi q\, \xi(p/q))|}{\pi q} = \xi\left(p/q + \frac{|\sin(\pi q \, (p/q))|}{\pi q} \right) \\
   &\implies |\sin(\pi q\, \xi(p/q))| = 0 \\
   &\implies q\, \xi(p/q) \in \Z.
  \end{align*}
  Since $\xi$ is strictly increasing, we have $0 \leq \xi(p/q) < 1$ for all $0\leq p < q \in \Z_{\geq 1}$, and thus
  \begin{equation*}
   0 < q\, \xi(1/q) < q\, \xi(2/q) < \cdots < q\, \xi(1-1/q) < q \in \Z,
  \end{equation*}
  which implies $q\, \xi(p/q) = p$. Since $\mathcal{T}_{1}\xi = \xi\mathcal{T}_{1}$, it follows that $\xi(r) = r$ for all $r\in \Q$, concluding the proof.
 \end{proof}

 The group $\SLt_2(\R)$ may then be characterized as the group making the following diagram commute, with exact rows:
 \begin{equation}
  \begin{tikzcd}[column sep=scriptsize, row sep=scriptsize]
   1\arrow[r] & \Z\arrow[d, symbol=\simeq]\arrow[r] & \SLt_2(\R) \arrow[dd, hook]\arrow[r]\arrow[ddr, phantom, "\ulcorner", very near start] & \PSL_2(\R) \arrow[dd, hook]\arrow[r] & 1\phantom{.} \\[-.5em]
    & Z\big(\Hmot_{+}(\bbS^1)\big)\arrow[d, symbol=\text{$=$}] & & & \\[-.5em]
   1\arrow[r] & \{\mathcal{T}_{n}\}_{n\in\Z} \arrow[r] & \Hmot_{+}(\bbS^1)\arrow[r] & \Hmo_{+}(\bbS^1)\arrow[r] & 1.
  \end{tikzcd} \label{gtilde}
 \end{equation}
 
 \begin{rem}[Alternative construction]
  In 1964, L. Puk\'{a}nszky \cite[Section 1.B, \textbf{a.}]{puk64} constructed $\SLt_{2}(\R)$ by lifting the elements $\rho_{\vartheta}$, $\mathrm{a}_{\lambda}$, $\mathrm{u}_{x}$ in the Iwasawa decomposition of $\PSL_2(\R)$; i.e., without recourse to $\Hmot_{+}(\bbS^1)$.
 \end{rem}

\subsection{Canonical liftings}\label{canoli}
 Consider the unique family of liftings $\widetilde{\varphi}\in \Hmot_{+}(\bbS^1)$ of $\varphi\in\Hmo_{+}(\bbS^1)$ defined by
 \begin{gather*}
  \widetilde{\varphi}(0) := \text{the unique real number in } [0,1) \text{ such that } \overline{\widetilde{\varphi}(0)} = \varphi(\overline{0})
 \end{gather*}
 and, for $x\in [0,1)$,
 \begin{equation*}
  \widetilde{\varphi}(x) := \text{the unique lifting of } \varphi(\overline{x}) \text{ in } [\widetilde{\varphi}(0), \widetilde{\varphi}(0) + 1).
 \end{equation*}
 For any $n\in\Z$, $\widetilde{\varphi}(x+n) := \widetilde{\varphi}(x)+n$. With the following two assertions, we call these the \emph{canonical liftings}.

  \medskip
  \noindent
  $\bullet \text{ \underline{Assertion 1}:}$ $\widetilde{\varphi} \in \Hmot_{+}(\bbS^1)$.

  For any $0\leq t < 1$, we have $\varphi\big([\overline{0},\overline{t})\big) = [\varphi(\overline{0}),\varphi(\overline{t}))$, hence $\widetilde{\varphi}\big([0,t)\big) = [\widetilde{\varphi}(0), \widetilde{\varphi}(t))$, implying that $\restr{\widetilde{\varphi}}{[0,1)}$ is bijective, bi-continuous, and increasing. Since
  \[ \widetilde{\varphi}(x) = \lfloor x\rfloor + \restr{\widetilde{\varphi}}{[0,1)}(x-\lfloor x\rfloor) \]
  for all $x\in\R$, it is clear that $\widetilde{\varphi}$ is also bijective and bi-continuous.

  \medskip
  \noindent
  $\bullet \text{ \underline{Assertion 2}:}$ If $\overline{\alpha} = \varphi$, then $\alpha = \mathcal{T}_n\, \widetilde{\varphi}$ for some $n\in\Z$.

  For $0\leq t < 1$, we have $\overline{\alpha(t)} = \overline{\alpha}(\overline{t}) = \varphi(\overline{t})$; thus, by construction, $\alpha(x)-\widetilde{\varphi}(x)$ is an integer, for every $x\in \R$. Since both $\alpha$ and $\widetilde{\varphi}$ are continuous and $\Z$ is discrete, we conclude that $\alpha(x)-\widetilde{\varphi}(x)$ is constant, and thus $\alpha \widetilde{\varphi}^{-1} \in \{\mathcal{T}_n\}_{n\in\Z}$.

 \FloatBarrier
 \begin{figure}[!htb]
  \centering
  \begin{tikzpicture}
   \foreach \t in {-2,...,2}
   \draw[-, dotted] (3*\t,3.5)--(3*\t,-0.5);

   \fill[pattern=north east lines, pattern color=black!50] (0,3) -- (1.5,0) -- (4.5,0) -- (3,3);

   \draw[->] (-6.5,3)--(6.75,3);
   \draw[->] (-6.5,0)--(6.75,0);

   \draw (0,3) node {\footnotesize{$\bullet$}};
   \draw (1.5,0) node {\footnotesize{$\bullet$}};

   \foreach \stp in {-2,...,2}
   \draw (3*\stp,3) node {\footnotesize{$\circ$}};
   \foreach \stpp in {-2,...,1}
   \draw (3*\stpp +1.5,0) node {\footnotesize{$\circ$}};

   \foreach \st in {-2,...,1}
   \draw[->, black] (3*\st,3) -- (3*\st + 1.5,0);
   \foreach \stt in {-5,-4,-2,-1,1,2,4,5}
   \draw[->, dashed, black] (\stt,3) -- (\stt + 1.5,0);


   \draw[-, black] (6,3) -- (6.75,1.5);
   \draw[->, dashed, black] (-6.5,1.79) -- (-5.5,0);

   \draw[thick] (7,2.5) node[anchor=south west] {$\R$};
   \draw[thick] (7,0.5) node[anchor=north west] {$\R$};
   \draw[->] (7.3,2.3)--(7.3,0.7);
   \draw[thick] (7.4,1.8) node[anchor=north west] {$\widetilde{\varphi}$};

   \foreach \x in {-2,...,2}
   \draw (3*\x,3)--(3*\x,3.1) node[anchor=south east] {\x};
   \foreach \y in {-2,...,2}
   \draw (3*\y,0)--(3*\y,-0.1) node[anchor=north east] {\y};

   \draw[anchor=north, thick] (1.8,-0.1) node {\rotatebox{-15}{$\widetilde{\varphi}(0)$}};
   \draw[anchor=north, thick] (4.8,-0.1) node {\rotatebox{-15}{$\widetilde{\varphi}(1)$}};
  \end{tikzpicture}
  \vspace{-1em}
  \captionsetup{singlelinecheck=off}
  \caption{Canonical lifting of $\varphi: \bbS^1 \ni \overline{x} \,\mapsto\, \overline{x + 1/2} \in \bbS^1$.}
  \label{fig1}
 \end{figure}

 \begin{rem}[Alternative liftings]\label{zhrem}
  In Lemma 3.5, p. 150 of Zhang \cite{zhan01}, a different section of the covering $\SLt_2(\R) \twoheadrightarrow \PSL_2(\R)$ is considered. The construction of such alternative liftings may be briefly reproduced as follows. For an element $\gamma \in \PSL_2(\R)$, there are two possibilities:
  \begin{enumerate}[label=(\roman*)]
   \item \emph{\underline{Fixed point}:} If $\gamma$ has a real eigenspace,\footnote{Meaning that, if $\gamma = \begin{bsmallmatrix} a & b \\ c & d \end{bsmallmatrix}$, then either $\begin{psmallmatrix} a & b \\ c & d \end{psmallmatrix}$ or $\begin{psmallmatrix} -a & -b \\ -c & -d \end{psmallmatrix} \in \SL_2(\R)$ has a real positive eigenvalue.} then its induced self-homomorphism $\varphi_{\gamma} \in \Hmo_{+}(\bbS^1)$ has a fixed point. Thus, one takes $\widehat{\gamma} \in \SLt_2(\R)$ to be the \emph{unique} lifting of $\varphi_{\gamma}$ which has a fixed point.

   \medskip
   \item \emph{\underline{General case}:} Let $\gamma = \begin{bsmallmatrix} a & b \\ c & d \end{bsmallmatrix}$. If $c=0$, then $(1,0)^{\mathrm{T}} \in \R^{2}$ is an eigenvector of $\begin{psmallmatrix} a & b \\ c & d \end{psmallmatrix} \in \SL_2(\R)$, so it falls into the previous case. We may assume then that $c\neq 0$. With that, we have
   \[ \det\left(\begin{pmatrix} a & b \\ c & d \end{pmatrix} - \begin{pmatrix} 1 & x \\ 0 & 1 \end{pmatrix}\right) = 0, ~\text{ where } x := b - \frac{(a-1)(d-1)}{c}, \]
   and thus, $\det(\begin{psmallmatrix} a & b \\ c & d \end{psmallmatrix}\begin{psmallmatrix} 1 & x \\ 0 & 1 \end{psmallmatrix}^{-1} - \mathrm{Id}) = 0$, implying that $\mathrm{u}_x$, $\gamma\mathrm{u}_{x}^{-1} \in \PSL_2(\R)$ have each a real eigenspace, falling into the previous case. Writing $\gamma_1 := \gamma\mathrm{u}_{x}^{-1}$, we may then define $\widehat{\gamma} := \widehat{\gamma}_1 \widehat{\mathrm{u}}_{x}$. Notice that $x = x(\gamma)$ is the unique real number satisfying the $\det = 0$ condition, therefore $\widehat{\gamma} \in \SLt_2(\R)$ is uniquely determined.
  \end{enumerate}
 \end{rem}

\section{Invariants in \texorpdfstring{$\Hmot_{+}(\bbS^1)$}{Homeo\~{}\_+(S\^{}1)}}\label{sec4}
 Following Section 5 of Ghys \cite{ghy01}, define the \emph{translation number} of $\alpha \in \Hmot_{+}(\bbS^1)$ as the limit
 \begin{equation}
  \tau(\alpha) := \lim_{n\to +\infty} \frac{\alpha^{n}(x)-x}{n}, \label{trnm}
 \end{equation}
 where $x\in \R$ is arbitrary. The function $\tau$ is well-defined and is a \emph{homogeneous quasi-homomorphism};\footnote{Let $G$ be a group. A function $f:G\to \R$ is a \emph{quasi-homomorphism} if $\sup_{g,h\in G} |f(gh) - f(g) - f(h)|< \infty$ If, in addition, $f(g^k) = kf(g)$ for every $g\in G$, $k\in \Z$, then $f$ is said to be \emph{homogeneous}.} in particular, it is invariant under conjugation. The goal of this section is to describe two additional invariants associated to an element $\alpha \in \Hmot_{+}(\bbS^1)$:
 \begin{itemize}
  \item The \emph{direction type}: composed of \emph{translation magnitude} $|\tau(\alpha)| \in \R_{\geq 0}$ and \emph{translation direction} $\sigma(\alpha) \in \{-1, 0, 1\}$; and\smallskip

  \item The \emph{function} $\ell^{\sharp}: \Hmot_{+}(\bbS^1) \to \R^{\sharp}_{\geq [0]}$.
 \end{itemize}
 \noindent
 Both notions encode some way to measure ``displacement'' in $\Hmot_{+}(\bbS^1)$. The translation direction $\sigma$ is a refinement of the sign of $\tau$, so that the pair $(|\tau|, \sigma)$ carries more information than just $\tau$. This pair gives us the \emph{direction type}.

\subsection{Direction type}
 For $\alpha \in \Hmot_{+}(\bbS^1)$, we define $\sigma(\alpha)$ as follows:
 \begin{itemize}
  \item If $|\tau(\alpha)| > 0$, then $\sigma(\alpha) := \tau(\alpha)/|\tau(\alpha)|$;

  \item If $\alpha \neq \mathrm{id}_{\R}$, $\tau(\alpha)=0$ and $\tau(\mathcal{T}_{\eps}\,\alpha) > 0$ for every $\eps > 0$, then $\sigma(\alpha) := +1$;

  \item If $\alpha \neq \mathrm{id}_{\R}$, $\tau(\alpha)=0$ and $\tau(\mathcal{T}_{-\eps}\,\alpha) < 0$ for every $\eps > 0$, then $\sigma(\alpha) := -1$;

  \item If none of the above is satisfied, we define $\sigma(\alpha) := 0$.
 \end{itemize}

 \noindent
 The case where both $\tau(\mathcal{T}_{\eps}\,\alpha)>0$ and $\tau(\mathcal{T}_{-\eps}\,\alpha)< 0$ are true for all $\eps>0$ is only satisfied by the identity. Indeed, if $\alpha \neq \mathrm{id}_{\R}$, then there is $x\in \R$ such that either $\alpha(x) > x$ or $\alpha(x) < x$; thus, letting $\eps := |\alpha(x)-x|$, either $\tau(\mathcal{T}_{-\eps}\,\alpha) = 0$ or $\tau(\mathcal{T}_{\eps}\,\alpha) = 0$. Therefore, $\sigma$ is well-defined for all elements of $\Hmot_{+}(\bbS^1)$. 

 \begin{defn}[Direction types]\label{dirtype}
  Given $\alpha \in \Hmot_{+}(\bbS^1)$ with $\alpha \neq \id_{\R}$, we say that $\alpha$ has \emph{direction type} $(\frac{\tau}{|\tau|}, \sigma)$, where we define $\frac{\tau}{|\tau|} :=0$ if $\tau(\alpha)=0$.
  
  \noindent
  \begin{center}
  \renewcommand{\arraystretch}{1.5}
  \begin{tabular}{|c|c|c|}
   \hline
   \emph{\textbf{Direction type}} & \textnormal{\textbf{$(\frac{\tau}{|\tau|}, \sigma)$-pair}} & \textnormal{\textbf{Description}} \\ \hline\hline
   \emph{Forward} & $(1, 1)$ & {$\forall x\in\R,\, \alpha(x) - x > 0$} \\ \hline
   \emph{Semi-forward} & $(0, 1)$ & {$\alpha$ has a fixed point; $\forall x\in\R,\, \alpha(x) - x \geq 0$} \\ \hline
   \emph{Alternating} & $(0, 0)$ & {$\alpha(x) - x$ changes sign} \\ \hline
   \emph{Semi-backward} & $(0, -1)$ & {$\alpha$ has a fixed point; $\forall x\in\R,\, \alpha(x) - x \leq 0$} \\ \hline
   \emph{Backward} & $(1, -1)$ & {$\forall x\in\R,\, \alpha(x) - x < 0$} \\
   \hline 
  \end{tabular}
  \end{center}
  \noindent
  If $\alpha = \id_{\R}$, then we simply say it is the identity.
 \end{defn}

 If $\alpha$ is of alternating type, then so is $\alpha^{-1}$. If $\alpha$ is of forward (resp. semi-forward) type, then $\alpha^{-1}$ is of backward (resp. semi-backward) type; that is, $\frac{\tau}{|\tau|}(\alpha^{-1}) = \frac{\tau}{|\tau|}(\alpha)$ and $\sigma(\alpha^{-1}) = -\sigma(\alpha)$. Since $\tau$ is invariant under conjugation, so is $\frac{\tau}{|\tau|}$; the same holds for $\sigma$:

 \FloatBarrier
 \begin{figure}[!htb]
 \centering
 \begin{minipage}{0.32\textwidth}\begin{center}
  \begin{tikzpicture}[scale=0.7, every node/.style={scale=0.7}]
   \draw[help lines, black!10] (-1,-1) grid (5,5);
   \draw[help lines, ystep=2, xstep=2, black!20] (-1,-1) grid (5,5);
   \draw[black] (0,-0.02) node {\large{$\bullet$}};
   \draw[black] (4,3.98) node {\large{$\circ$}};
   \draw[black!80] (0,0)--(-.1,-.1) node[anchor=north east] {0};
   \draw[black!80] (0,2)--(-.1,2-.2) node[anchor=east] {.5};
   \draw[black!80] (0,4)--(-.1,4-.2) node[anchor=east] {1};
   \draw[black!80] (2,0)--(2-.2,-.1) node[anchor=north] {.5};
   \draw[black!80] (4,0)--(4-.2,-.1) node[anchor=north] {1};

   \draw[->, black] (0,-1)--(0,5);
   \draw[->, black] (-1,0)--(5,0);
   \draw[-, dashed, black] (-1,-1)--(5,5);

   \draw[scale=1,domain=-1:3.85,smooth,variable=\x,blue!50!black ,thick] plot ({\x},{\x + 4*sin(2*pi*(\x/4) r)/(2*pi) + 4/pi});
  \end{tikzpicture}

  {\small $\alpha(x) = x + \frac{\sin(2\pi x)}{2\pi} + \frac{1}{\pi}$}

  (Forward)
 \end{center}\end{minipage}
 \begin{minipage}{0.32\textwidth}\begin{center}
  \begin{tikzpicture}[scale=0.7, every node/.style={scale=0.7}]
   \draw[help lines, black!10] (-1,-1) grid (5,5);
   \draw[help lines, ystep=2, xstep=2, black!20] (-1,-1) grid (5,5);
   \draw[black] (0,-0.02) node {\large{$\bullet$}};
   \draw[black] (4,3.98) node {\large{$\circ$}};
   \draw[black!80] (0,0)--(-.1,-.1) node[anchor=north east] {0};
   \draw[black!80] (0,2)--(-.1,2-.2) node[anchor=east] {.5};
   \draw[black!80] (0,4)--(-.1,4-.2) node[anchor=east] {1};
   \draw[black!80] (2,0)--(2-.2,-.1) node[anchor=north] {.5};
   \draw[black!80] (4,0)--(4-.2,-.1) node[anchor=north] {1};

   \draw[->, black] (0,-1)--(0,5);
   \draw[->, black] (-1,0)--(5,0);
   \draw[-, thick, purple!25!black] (-1,-1)--(5,5);
  \end{tikzpicture}

  $\alpha(x) = x$

  \textit{(Identity)}
 \end{center}\end{minipage}
 \begin{minipage}{0.32\textwidth}\begin{center}
  \begin{tikzpicture}[scale=0.7, every node/.style={scale=0.7}]
   \draw[help lines, black!10] (-1,-1) grid (5,5);
   \draw[help lines, ystep=2, xstep=2, black!20] (-1,-1) grid (5,5);
   \draw[black] (0,-0.02) node {\large{$\bullet$}};
   \draw[black] (4,3.98) node {\large{$\circ$}};
   \draw[black!80] (0,0)--(-.1,-.1) node[anchor=north east] {0};
   \draw[black!80] (0,2)--(-.1,2-.2) node[anchor=east] {.5};
   \draw[black!80] (0,4)--(-.1,4-.2) node[anchor=east] {1};
   \draw[black!80] (2,0)--(2-.2,-.1) node[anchor=north] {.5};
   \draw[black!80] (4,0)--(4-.2,-.1) node[anchor=north] {1};

   \draw[->, black] (0,-1)--(0,5);
   \draw[->, black] (-1,0)--(5,0);
   \draw[-, dashed, black] (-1,-1)--(5,5);

   \draw[scale=1,domain=.15:5,smooth,variable=\x,red!50!black ,thick] plot ({\x},{\x + 4*sin(2*pi*(\x/4) r)/(2*pi) - 4/pi});
  \end{tikzpicture}

  {\small $\alpha(x) = x + \frac{\sin(2\pi x)}{2\pi} - \frac{1}{\pi}$}

  (Backward)
 \end{center}\end{minipage}

 \bigskip
 \noindent
 \begin{minipage}{0.32\textwidth}\begin{center}
  \begin{tikzpicture}[scale=0.7, every node/.style={scale=0.7}]
   \draw[help lines, black!10] (-1,-1) grid (5,5);
   \draw[help lines, ystep=2, xstep=2, black!20] (-1,-1) grid (5,5);
   \draw[black] (0,-0.02) node {\large{$\bullet$}};
   \draw[black] (4,3.98) node {\large{$\circ$}};
   \draw[black!80] (0,0)--(-.1,-.1) node[anchor=north east] {0};
   \draw[black!80] (0,2)--(-.1,2-.2) node[anchor=east] {.5};
   \draw[black!80] (0,4)--(-.1,4-.2) node[anchor=east] {1};
   \draw[black!80] (2,0)--(2-.2,-.1) node[anchor=north] {.5};
   \draw[black!80] (4,0)--(4-.2,-.1) node[anchor=north] {1};

   \draw[->, black] (0,-1)--(0,5);
   \draw[->, black] (-1,0)--(5,0);
   \draw[-, dashed, black] (-1,-1)--(5,5);

   \draw[scale=1, domain={-.5-1/(pi)}:{4.5-1/(pi)}, smooth, variable=\x, blue!50!purple!50!black, thick] plot ({\x},{\x + 4*sin(2*pi*(\x/4) r)/(2*pi) + 4/(2*pi)});

   \draw[black] (3,2.98) node {\rotatebox{30}{\Huge{$\mathbf{\times}$}}};
  \end{tikzpicture}

  {\small $\alpha(x) = x + \frac{\sin(2\pi x)}{2\pi} + \frac{1}{2\pi}$}

  (Semi-forward)
 \end{center}\end{minipage}
 \begin{minipage}{0.32\textwidth}\begin{center}
  \begin{tikzpicture}[scale=0.7, every node/.style={scale=0.7}]
   \draw[help lines, black!10] (-1,-1) grid (5,5);
   \draw[help lines, ystep=2, xstep=2, black!20] (-1,-1) grid (5,5);
   \draw[black] (0,-0.02) node {\large{$\bullet$}};
   \draw[black] (4,3.98) node {\large{$\circ$}};
   \draw[black!80] (0,0)--(-.1,-.1) node[anchor=north east] {0};
   \draw[black!80] (0,2)--(-.1,2-.2) node[anchor=east] {.5};
   \draw[black!80] (0,4)--(-.1,4-.2) node[anchor=east] {1};
   \draw[black!80] (2,0)--(2-.2,-.1) node[anchor=north] {.5};
   \draw[black!80] (4,0)--(4-.2,-.1) node[anchor=north] {1};

   \draw[->, black] (0,-1)--(0,5);
   \draw[->, black] (-1,0)--(5,0);
   \draw[-, dashed, black] (-1,-1)--(5,5);

   \draw[scale=1,domain=-.5:4.5,smooth,variable=\x,purple!50!black,thick] plot ({\x},{\x + 4*sin(2*pi*(\x/4) r)/(2*pi)});

   \draw[black] (0,-0.02) node {\rotatebox{30}{\Huge{$\mathbf{\times}$}}};
   \draw[black] (2,1.98) node {\rotatebox{30}{\Huge{$\mathbf{\times}$}}};
   \draw[black] (4,3.98) node {\rotatebox{30}{\Huge{$\mathbf{\times}$}}};
  \end{tikzpicture}

  $\alpha(x) = x + \frac{\sin(2\pi x)}{2\pi}$

  (Alternating)
 \end{center}\end{minipage}
 \begin{minipage}{0.32\textwidth}\begin{center}
  \begin{tikzpicture}[scale=0.7, every node/.style={scale=0.7}]
   \draw[help lines, black!10] (-1,-1) grid (5,5);
   \draw[help lines, ystep=2, xstep=2, black!20] (-1,-1) grid (5,5);
   \draw[black] (0,-0.02) node {\large{$\bullet$}};
   \draw[black] (4,3.98) node {\large{$\circ$}};
   \draw[black!80] (0,0)--(-.1,-.1) node[anchor=north east] {0};
   \draw[black!80] (0,2)--(-.1,2-.2) node[anchor=east] {.5};
   \draw[black!80] (0,4)--(-.1,4-.2) node[anchor=east] {1};
   \draw[black!80] (2,0)--(2-.2,-.1) node[anchor=north] {.5};
   \draw[black!80] (4,0)--(4-.2,-.1) node[anchor=north] {1};

   \draw[->, black] (0,-1)--(0,5);
   \draw[->, black] (-1,0)--(5,0);
   \draw[-, dashed, black] (-1,-1)--(5,5);

   \draw[scale=1,domain={-.5+1/(pi)}:{4.5+1/(pi)},smooth,variable=\x,red!50!purple!50!black,thick] plot ({\x},{\x + 4*sin(2*pi*(\x/4) r)/(2*pi) - 4/(2*pi)});

   \draw[black] (1,.98) node {\rotatebox{30}{\Huge{$\mathbf{\times}$}}};
  \end{tikzpicture}

  {\small $\alpha(x) = x + \frac{\sin(2\pi x)}{2\pi} - \frac{1}{2\pi}$}

  (Semi-backward)
 \end{center}\end{minipage}
  \captionsetup{singlelinecheck=off}
  \caption{Examples of functions of each direction type.}
 \label{figDT}
 \end{figure}

 \begin{lem}\label{dirinv}
  Direction type is invariant under conjugation in $\Hmot_{+}(\bbS^1)$.
 \end{lem}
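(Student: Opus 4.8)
The plan is to isolate the two components of the direction type and show that each is a conjugacy invariant. The first component depends only on $\tau(\alpha)$, and $\tau$ is a class function by Lemma \ref{t0fpt}(ii) (being a homogeneous quasi-homomorphism, it is also covered by Lemma \ref{hqhcls}); moreover ``$\alpha = \id_{\bbS^1}$'' is visibly a conjugacy-invariant condition. So the statement reduces to proving $\sigma(\beta\alpha\beta^{-1}) = \sigma(\alpha)$ for every $\beta \in \Hmot_{+}(\bbS^1)$.

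The crux is to rewrite $\sigma(\alpha)$ purely in terms of the sign behaviour of the displacement function $f_{\alpha}(x) := \alpha(x) - x$ (which is continuous and $1$-periodic). First I would record the elementary equivalence, for $\gamma \in \Hmot_{+}(\bbS^1)$, that $\tau(\gamma) > 0$ if and only if $\gamma(x) > x$ for all $x$: by Lemma \ref{t0fpt}(iii) a nonzero $\tau(\gamma)$ means $\gamma$ is fixed-point-free, so $f_{\gamma}$ has constant nonzero sign by continuity, and the telescoping estimate in the proof of Lemma \ref{t0fpt}(iii) shows that $f_{\gamma} > 0$ everywhere forces $\tau(\gamma) \geq \min_{[0,1]} f_{\gamma} > 0$, while $f_{\gamma} < 0$ everywhere forces $\tau(\gamma) < 0$. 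Applying this to $\gamma = \mathcal{T}_{\eps}\alpha$, for which $f_{\gamma} = f_{\alpha} + \eps$, and letting $\eps \downarrow 0$, the defining clauses of $\sigma$ collapse to the following: $\sigma(\alpha) = +1$ if and only if $\alpha \neq \id_{\bbS^1}$ and $f_{\alpha} \geq 0$ everywhere; $\sigma(\alpha) = -1$ if and only if $\alpha \neq \id_{\bbS^1}$ and $f_{\alpha} \leq 0$ everywhere; and $\sigma(\alpha) = 0$ otherwise. (This recovers the ``Description'' column of Definition \ref{dirtype}; the point to state carefully is that ``$f_{\alpha} + \eps > 0$ everywhere for all $\eps > 0$'' amounts to ``$f_{\alpha} \geq 0$ everywhere'', not to the strict inequality.)

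Finally I would check that each of the conditions ``$f_{\alpha} \geq 0$ everywhere'', ``$f_{\alpha} \leq 0$ everywhere'', and ``$\alpha = \id_{\bbS^1}$'' is preserved under $\alpha \mapsto \beta\alpha\beta^{-1}$. Here the hypothesis $\beta \in \Hmot_{+}(\bbS^1)$ is used precisely through the fact that $\beta$ is a strictly increasing self-homeomorphism of $\R$: writing $x = \beta(y)$, one has $f_{\beta\alpha\beta^{-1}}(x) = \beta(\alpha(y)) - \beta(y)$, which has the same sign as $\alpha(y) - y = f_{\alpha}(y)$ because $\beta$ is order-preserving; and since $y$ ranges over all of $\R$ as $x$ does, $f_{\beta\alpha\beta^{-1}}$ is $\geq 0$ (resp. $\leq 0$, resp. identically $0$) everywhere exactly when $f_{\alpha}$ is. Together with the reduction of the previous paragraph, this gives $\sigma(\beta\alpha\beta^{-1}) = \sigma(\alpha)$, hence the invariance of the full direction type. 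I do not anticipate a genuine obstacle: beyond the ``$\eps \downarrow 0$'' bookkeeping already flagged, the argument is just the observation that conjugating by an increasing homeomorphism cannot alter where a displacement function is positive, negative, or zero.
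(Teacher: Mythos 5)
Your proof is correct and follows essentially the same route as the paper: both arguments come down to the observation that conjugating by a strictly increasing homeomorphism $\beta$ sends the displacement function $x \mapsto \alpha(x)-x$ to one with the same sign behaviour, since $\beta\alpha\beta^{-1}(\beta(y)) - \beta(y) = \beta(\alpha(y)) - \beta(y)$ has the sign of $\alpha(y)-y$. The only difference is that you explicitly derive the sign-behaviour characterization of $\sigma$ from its $\eps$-definition (via Lemma \ref{t0fpt}(iii) and the ``$f_{\alpha}+\eps>0$ for all $\eps>0$ iff $f_{\alpha}\geq 0$'' point), whereas the paper's proof simply invokes the Description column of Definition \ref{dirtype}; this is a worthwhile bit of extra care but not a different method.
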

 \begin{proof}
  Let $\alpha,\beta \in \Hmot_{+}(\bbS^1)$. If $\alpha$ is of forward type, then $\alpha(x) > x$ for all $x\in \R$, which implies $\alpha(\beta^{-1}(x)) > \beta^{-1}(x)$; thus, as $\beta\beta^{-1}(x) = x$, it follows that $\beta\alpha\beta^{-1}(x) > x$. The same holds for semi-forward type, and the case where $\alpha$ is of backward or semi-backward type goes analogously.
  
  The set of elements in $\Hmot_{+}(\bbS^1)$ having a fixed point in $\R$ is clearly invariant under conjugation. By Definition \ref{dirtype}, it decomposes as the disjoint union of semi-forward, semi-backward, and alternating. since the first two cases are invariant under conjugation, so must be the third one.
 \end{proof}


\subsection{Displacement length}\label{lsharp}
 As defined in Lemma 3.7 of Zhang \cite{zhan01}, consider now the \emph{length} function:
 \begin{equation}
  \ell:\Hmot_{+}(\bbS^1) \to \R_{\geq 0}, \qquad \ell(\alpha) := \sup_{t\in [0,1]} |\alpha(t)-t|. \label{lngth}
 \end{equation}
 Since $\alpha(t+1) = \alpha(t)+1$ for all $t\in\R$ and $[0,1]$ is compact, $\ell$ is well-defined, and we have $\ell(\alpha) = \max_{t\in[x, x+1]} |\alpha(t)-t|$ for all $x\in\R$, with $\ell(\alpha) = 0$ if and only if $\alpha = \id_{\R}$. One checks that $\ell$ is $1$-Lipschitz in $\Hmot_{+}(\bbS^1)$ natural metric, and hence continuous. Moreover, for any translation $\mathcal{T}_y$ with $y\in\R$ it holds that $\ell(\mathcal{T}_y) = |y|$.

 \begin{lem}[Basic properties of $\ell$]\label{bsinq}
  Let $\alpha,\beta \in \Hmot_{+}(\bbS^1)$. The following hold:
  \begin{enumerate}[label=\textnormal{(\roman*)}]
   \item $|\tau(\alpha)| \leq \ell(\alpha)$;

   \item $\ell(\alpha\beta) \leq \ell(\alpha) + \ell(\beta)$;

   \item If $\alpha$ has a fixed point, then $\ell(\alpha) < 1$;

   \item $\ell([\alpha,\beta]) < 2$.
  \end{enumerate}
 \end{lem}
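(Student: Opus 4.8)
The plan is to prove the four items essentially independently. The single fact used repeatedly is that for $\gamma\in\Hmot_{+}(\bbS^1)$ the displacement $x\mapsto\gamma(x)-x$ is invariant under $x\mapsto x+1$, so $\sup_{x\in\R}|\gamma(x)-x|=\ell(\gamma)$ and this supremum is attained on every closed interval of length $1$.

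For (i), telescope $\alpha^{n}(x)-x=\sum_{k=0}^{n-1}\bigl(\alpha(\alpha^{k}(x))-\alpha^{k}(x)\bigr)$; each summand has absolute value $\le\ell(\alpha)$, so $|\alpha^{n}(x)-x|\le n\,\ell(\alpha)$, and dividing by $n$ and letting $n\to+\infty$ in \eqref{trnm} gives $|\tau(\alpha)|\le\ell(\alpha)$. For (ii), split $\alpha\beta(t)-t=\bigl(\alpha(\beta(t))-\beta(t)\bigr)+\bigl(\beta(t)-t\bigr)$; the first term is the displacement of $\alpha$ at $\beta(t)$, hence $\le\ell(\alpha)$ in absolute value, and the second is $\le\ell(\beta)$, so taking $\sup_{t}$ gives $\ell(\alpha\beta)\le\ell(\alpha)+\ell(\beta)$. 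For (iii), if $\alpha(x^{\ast})=x^{\ast}$ then $\alpha(x^{\ast}+1)=x^{\ast}+1$ as well, so by strict monotonicity (Proposition \ref{rlprrv}) $\alpha$ maps $[x^{\ast},x^{\ast}+1]$ bijectively onto itself; hence $|\alpha(t)-t|<1$ for $t$ in the open interval and $=0$ at its endpoints, and since $\ell(\alpha)$ is the maximum of the continuous function $|\alpha(t)-t|$ over this closed unit interval, $\ell(\alpha)<1$.

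The work for (iv) begins with an oscillation bound: for every $\gamma\in\Hmot_{+}(\bbS^1)$ and all $x,y\in\R$, $\bigl|(\gamma(x)-x)-(\gamma(y)-y)\bigr|<1$. By $1$-periodicity of the displacement one may assume $x\le y<x+1$; then $(\gamma(x)-x)-(\gamma(y)-y)=(\gamma(x)-\gamma(y))+(y-x)$, with $\gamma(x)-\gamma(y)\in(-1,0]$ by strict monotonicity ($\le 0$ since $x\le y$, and $>-1$ since $\gamma(y)<\gamma(x+1)=\gamma(x)+1$) and $y-x\in[0,1)$, so the sum lies strictly between $-1$ and $1$. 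To apply this, rewrite $[\alpha,\beta]=\alpha\beta\alpha^{-1}\beta^{-1}$ and note $[\alpha,\beta]\bigl(\beta(\alpha(v))\bigr)=\alpha(\beta(v))$; since $\beta\circ\alpha$ is a bijection of $\R$, $\ell([\alpha,\beta])=\sup_{v\in\R}\bigl|\alpha(\beta(v))-\beta(\alpha(v))\bigr|$. Expanding $\alpha(\beta(v))$ and $\beta(\alpha(v))$ in terms of displacements and cancelling the common $v$, this difference equals $\bigl[(\alpha(\beta(v))-\beta(v))-(\alpha(v)-v)\bigr]+\bigl[(\beta(v)-v)-(\beta(\alpha(v))-\alpha(v))\bigr]$, a sum of an oscillation difference for $\alpha$ and one for $\beta$. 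As a function of $v$ this is continuous and $1$-periodic, so its absolute value attains a maximum at some $v_{0}\in[0,1]$; applying the oscillation bound to $\alpha$ with the points $\beta(v_{0}),v_{0}$ and to $\beta$ with the points $v_{0},\alpha(v_{0})$ shows each bracket is $<1$, whence $\ell([\alpha,\beta])<2$.

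I expect the one nontrivial choice to be the substitution $t=\beta(\alpha(v))$ used in (iv), which is what turns the displacement of the commutator into a visible sum of two oscillation differences; everything else is bookkeeping with $1$-periodicity and monotonicity. It should be noted that $2$, rather than $1$, is genuinely needed: $[\alpha,\beta]$ need not have a fixed point (equivalently, $\tau([\alpha,\beta])$ may fail to vanish), so (iii) cannot be invoked, and both $\alpha$ and $\beta$ contribute to the bound.
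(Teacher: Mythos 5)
Parts (i)--(iii) of your proposal coincide with the paper's proof: the same telescoping for (i), the same displacement splitting for (ii), and the same observation that a fixed point forces $\alpha$ to preserve $[x^{*},x^{*}+1]$ for (iii). Part (iv), however, is argued quite differently, and your argument is correct. The paper first reduces $[\alpha,\beta]$ to $[\widetilde{\varphi},\beta]$ via the canonical lifting of Proposition \ref{canoli}, then splits into two cases according to direction type (Lemma \ref{dirinv}): either $\widetilde{\varphi}$ and $\beta\widetilde{\varphi}^{-1}\beta^{-1}$ both have fixed points, in which case (ii) and (iii) give the bound, or they displace in opposite directions, in which case the two contributions partially cancel. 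Your route instead proves a clean oscillation bound $\bigl|(\gamma(x)-x)-(\gamma(y)-y)\bigr|<1$ for any $\gamma\in\Hmot_{+}(\bbS^1)$ (your monotonicity computation is exactly right, and is essentially the strict form of the defect estimate in Lemma \ref{evqsmrp}), and then uses the reparametrization $t=\beta(\alpha(v))$ to exhibit the displacement of the commutator as a sum of one $\alpha$-oscillation and one $\beta$-oscillation; the $1$-periodicity and continuity argument correctly upgrades the pointwise strict bound to a strict bound on the supremum. What your approach buys is self-containment and modularity: it needs neither the canonical liftings nor the conjugation-invariance of direction type, only monotonicity and commutation with $\mathcal{T}_1$, and it would transfer verbatim to $\Hmot_{0}(\T^n)$ with the bound $2n$. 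What the paper's case analysis buys is a more geometric picture (fixed points versus opposing directions) that is reused elsewhere, e.g.\ in the computation of $\ell^{\sharp}$ in Theorem \ref{finale}. Your closing remark that (iii) cannot simply be invoked because $[\alpha,\beta]$ need not have a fixed point is also accurate, since $\tau$ is only a quasi-homomorphism and need not vanish on commutators.
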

 \begin{proof}
  For part (i), for each $n\geq 1$ we have
  \begin{align*}
   |\alpha^{n}(x) - x| &\leq |\alpha(\alpha^{n-1}(x)) -\alpha^{n-1}(x)| + |\alpha^{n-1}(x) - x| \\
   &\leq \ell(\alpha) + |\alpha^{n-1}(x) - x|,
  \end{align*}
  and thus, $|\alpha^{n}(x) -x| \leq n\,\ell(\alpha)$, which implies $|\tau(\alpha)| \leq \ell(\alpha)$.
  
  For part (ii), since $\beta$ is bijective and $\beta(1) = \beta(0) + 1$, we have
  \begin{align*}
   \ell(\alpha\beta) &= \max_{t\in [0,1]} |\alpha\beta(t)-t| \\
   &\leq \max_{t\in [0,1]} \big(|\alpha\beta(t)-\beta(t)| + |\beta(t)-t|\big) \\
   &\leq \max_{u\in [\beta(0),\beta(1)]} |\alpha(u)-u| + \max_{t\in [0,1]} |\beta(t)-t| = \ell(\alpha) + \ell(\beta).
  \end{align*}

  For part (iii), let $x^{*}\in\R$ be some fixed point of $\alpha$. Since $\ell(\alpha) = \sup_{t\in[x^{*},x^{*}+1]}|\alpha(t)-t|$, take $t^{*}\in [x^{*}, x^{*}+1]$ such that $\ell(\alpha) = |\alpha(t^{*})-t^{*}|$. From the fact that $\alpha$ is strictly increasing and $\alpha(x^{*}+1) = \alpha(x^{*})+1$, we deduce that $\alpha([x^{*},x^{*}+1]) = [x^{*},x^{*}+1]$. Therefore, we have $T^{*} := \alpha(t^{*}) \in (x^{*},x^{*}+1)$, the endpoints of the interval being excluded because $\alpha(x^{*}) = x^{*}$, from where it follows that $\ell(\alpha) = |T^{*}-t^{*}| < 1$.

  For part (iv), by Lemma \ref{auttcent}, there are $\varphi\in \Hmo_{+}(\bbS^1)$ and $n\in\Z$ such that $\alpha = \mathcal{T}_n \widetilde{\varphi}$, where $\widetilde{\varphi}$ is the canonical lifting of $\varphi$. Since $\mathcal{T}_n$ is an element of the center, it follows that $[\alpha,\beta] = [\widetilde{\varphi}, \beta] = \widetilde{\varphi} \beta \widetilde{\varphi}^{-1} \beta^{-1}$. From Lemma \ref{dirinv}, we know that $\widetilde{\varphi}^{-1}$ and $\beta \widetilde{\varphi}^{-1} \beta^{-1}$ have the same direction type, therefore, either both $\widetilde{\varphi}$ and $\beta \widetilde{\varphi}^{-1} \beta^{-1}$ have fixed points or they have opposite directions (i.e., $\sigma(\widetilde{\varphi}) = - \sigma(\beta \widetilde{\varphi}^{-1} \beta^{-1})$). We consider these two cases separately:
  \begin{description}
   \item[Case 1]
    If both $\widetilde{\varphi}$ and $\beta \widetilde{\varphi}^{-1} \beta^{-1}$ have fixed points, then by item (iii) we have $\ell(\widetilde{\varphi}) < 1$ and $\ell(\beta \widetilde{\varphi}^{-1} \beta^{-1}) < 1$. Thus, by item (ii), we conclude that $\ell([\widetilde{\varphi}, \beta]) < 2$. \smallskip
   
   \item[Case 2]
    Suppose that $\sigma(\widetilde{\varphi})=1$. By the construction of canonical liftings, we have $0\leq \widetilde{\varphi}(0) < 1$, hence, $\max_{t\in [0,1]} |\widetilde{\varphi}(t)-t| \leq |\widetilde{\varphi}(1) - 0| < 2$. Since $\beta$ is a strictly increasing self-homeomorphism of $\R$, it holds that
    \begin{align*}
     &\phantom{\implies} &&\phantom{\beta\widetilde{\varphi}^{-1}\beta^{-1}(u)}\negphantom{$\widetilde{\varphi}^{-1}(t)$} \widetilde{\varphi}^{-1}(t) > t - 2\phantom{\beta(t) - 2}\negphantom{$t - 2$} \quad (\text{applying } \beta \text{ on both sides})\\
     &\implies &&\phantom{\beta\widetilde{\varphi}^{-1}\beta^{-1}(u)}\negphantom{$\beta\widetilde{\varphi}^{-1}(t)$} \beta\widetilde{\varphi}^{-1}(t) > \beta(t) - 2 \quad (\text{substituting } t \,\mapsto\, \beta^{-1}(u))\\
     &\implies &&\beta\widetilde{\varphi}^{-1}\beta^{-1}(u) > u - 2,
    \end{align*}
    which implies that $\ell(\beta\widetilde{\varphi}^{-1}\beta^{-1}) < 2$ as well. Hence, since $\sigma(\beta\widetilde{\varphi}^{-1}\beta^{-1})=-1$, for all $t\in\R$ we have
    \begin{gather*}
     0\leq \widetilde{\varphi}\big(\beta\widetilde{\varphi}^{-1}\beta^{-1}(t)\big) - \beta\widetilde{\varphi}^{-1}\beta^{-1}(t) < 2, \\
     -2 < \beta\widetilde{\varphi}^{-1}\beta^{-1}(t) - t \leq 0,
    \end{gather*}
    and thus
    \[ -2 < \widetilde{\varphi}\beta\widetilde{\varphi}^{-1}\beta^{-1}(t) - t < 2, \]
    which is equivalent to $\ell([\widetilde{\varphi}, \beta]) < 2$, as required. The case for when $\sigma(\widetilde{\varphi})=-1$ goes analogously. \qedhere
  \end{description}
 \end{proof}

 Conceptually, we would like to have a conjugacy invariant version of $\ell$. Based on the ``$\delta^{\sup}$'' function defined in Section 3 of Mochizuki \cite{moc16}, let
 \begin{equation*}
  \begin{aligned}
   \R^{\sharp} :=& ~\bigslant{\R}{\cong}, \text{ where } x \cong y \iff
    \begin{cases}
     x = y \in \Z, \text{ or} \\
     x,y \in (k,k+1) \text{ for some } k\in\Z.
    \end{cases} \\
   =& ~\{\ldots, [(-2,-1)], [-1], [(-1,0)], [0], [(0,1)], [1], [(1,2)], \ldots \},
  \end{aligned}
 \end{equation*}
 and denote the quotient map by $\R \ni x \,\mapsto\, x^{\sharp} \in \R^{\sharp}$. This object also arises as the quotient of $\R$ through the action of the subgroup of $\Hmot_{+}(\bbS^1)$ which fixes $\Z$, i.e., $\mathrm{Stab}_{\Hmot_{+}(\bbS^1)}(0) \curvearrowright \R$, where
 \begin{equation*}
  \mathrm{Stab}_{\Hmot_{+}(\bbS^1)}(0) := \{\alpha \in \Hmot_{+}(\bbS^1) ~|~ \alpha(0) = 0 \}.
 \end{equation*}
 Despite the natural order isomorphism between $\R^{\sharp}$ with the half-integers $\frac{1}{2}\Z$, the group structure of the latter is not shared by the former, so we avoid the identification. An alternative description of $x^{\sharp}$ would be the set $\{\lfloor x\rfloor, \lceil x\rceil\}$, highlighting the precise loss of information that takes place with the mapping $x\mapsto x^{\sharp}$ -- this is called \emph{indeterminacies} in Section 3 of Mochizuki \cite{moc16}.
 
 The main feature of $\R^{\sharp}$ is that, defining
 \begin{equation}
  \ell^{\sharp}(\alpha) := \big(\ell(\alpha)\big)^{\sharp} \in \R^{\sharp},\label{deflsharp}
 \end{equation}
 we have

 \begin{prop}\label{lcinv}
  $\ell^{\sharp}$ is invariant under conjugation in $\Hmot_{+}(\bbS^1)$.
 \end{prop}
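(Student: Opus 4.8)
The plan is to show that for any $\alpha, \beta \in \Hmot_{+}(\bbS^1)$, the real numbers $\ell(\alpha)$ and $\ell(\beta\alpha\beta^{-1})$ lie in the same class under the relation $\cong$ defining $\R^{\sharp}$, i.e.\ either they are equal integers or they lie in the same open unit interval $(k, k+1)$. The key observation is that, on the one hand, $\ell$ is \emph{not} conjugacy-invariant, but the failure is controlled: conjugating by $\beta$ can only move the value of $\ell$ by a bounded amount, and that bounded amount is small enough never to cross the ``integer barrier''.

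First I would reduce to comparing $\ell(\alpha)$ with $\ell(\beta\alpha\beta^{-1})$ and split according to the direction type of $\alpha$, which by Lemma \ref{dirinv} is shared by $\beta\alpha\beta^{-1}$. If $\alpha = \id_{\R}$ the statement is trivial. If $\alpha$ has a fixed point (the semi-forward, alternating, or semi-backward cases), then so does $\beta\alpha\beta^{-1}$, and by Lemma \ref{bsinq}(iii) both $\ell(\alpha)$ and $\ell(\beta\alpha\beta^{-1})$ lie in $[0,1)$; combined with the fact that $\ell(\gamma) = 0$ iff $\gamma = \id_\R$, both values lie in $\{0\} \cup (0,1)$, and since neither is $0$ they lie in the same class $[(0,1)]$ (or both equal $0$ in the identity case). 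This disposes of all direction types with a fixed point.

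The remaining case is when $\alpha$ is forward or backward, say forward, so $\alpha(x) > x$ for all $x$ and hence $\tau(\alpha) > 0$ need not hold but $\ell(\alpha) = \sup_{t \in [0,1]}(\alpha(t) - t)$ with the supremum of a strictly positive quantity. Here the idea is to bound $|\ell(\beta\alpha\beta^{-1}) - \ell(\alpha)|$ and, more importantly, to show the two values cannot straddle an integer. Concretely, I would use that conjugation by $\beta$ is realized by the homeomorphism $\beta$ of $\R$: writing $u = \beta^{-1}(t)$, we have $\beta\alpha\beta^{-1}(t) - t = \beta(\alpha(u)) - \beta(u)$, and since $\beta$ is strictly increasing with $\beta(u+1) = \beta(u)+1$, the number of integers strictly between $u$ and $\alpha(u)$ equals the number of integers strictly between $\beta(u)$ and $\beta(\alpha(u))$ --- this is where the $\cong$-relation's coarseness matches the ``indeterminacy'' built into $\R^\sharp$. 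More precisely, for a strictly increasing $\beta$ commuting with $\mathcal T_1$ and any $x < y$, one has $\lceil y - x \rceil \geq \beta(y) - \beta(x)$ and $\lfloor y - x\rfloor \leq \beta(y)-\beta(x)$ fail in general, so instead I would argue via the \emph{floor}: for such $\beta$, if $y - x > k$ with $k \in \Z_{\geq 0}$ then $\beta(y) - \beta(x) = \beta(y) - \beta(y-k) + k \cdot 1 > k$ since $\beta(y) > \beta(y-k)$; symmetrically $y - x < k+1$ implies $\beta(y) - \beta(x) < k+1$. Hence if $\alpha(u) - u \in (k, k+1)$ for the maximizing $u$, then $\beta(\alpha(u)) - \beta(u) \in (k, k+1)$ too, giving $\ell(\beta\alpha\beta^{-1}) \geq $ something in $(k,k+1)$; applying the same reasoning with $\beta^{-1}$ in place of $\beta$ (and to the maximizer of $\beta\alpha\beta^{-1}$) yields the reverse, pinning both $\ell(\alpha)$ and $\ell(\beta\alpha\beta^{-1})$ into the same half-open-to-open class, and one checks the boundary case $\ell(\alpha) = k \in \Z$ separately (there $\alpha(t)-t \leq k$ everywhere with equality somewhere, forcing by strict monotonicity of $\beta$ and periodicity that $\beta\alpha\beta^{-1}(t) - t \leq k$ with equality somewhere, so $\ell(\beta\alpha\beta^{-1}) = k$ as well).

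The main obstacle I expect is making the ``integer-counting is conjugation-invariant'' argument precise and rigorous at the boundary: showing that $\ell(\alpha) = k$ exactly (an integer value) is preserved, since near-integer values of $\alpha(t) - t$ require a careful use of the periodicity $\beta \mathcal{T}_1 = \mathcal{T}_1 \beta$ together with strict monotonicity of $\beta$ to conclude that $\sup_t\bigl(\beta\alpha\beta^{-1}(t) - t\bigr)$ is attained and equals $k$ rather than merely lying in $[k, k+1)$. Everything else is bookkeeping with the inequalities $\beta(y) - \beta(x) > k \iff y - x > k$ and $\beta(y)-\beta(x) < k+1 \iff y - x < k+1$ for $\beta$ commuting with integer translation, which follow directly from strict monotonicity.
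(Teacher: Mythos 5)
Your proposal is correct and follows essentially the same route as the paper: dispose of every direction type admitting a fixed point via Lemma \ref{bsinq}(iii) together with Lemma \ref{dirinv}, then for the forward/backward types use that a strictly increasing $\beta$ commuting with integer translations preserves each of the inequalities $\alpha(y)-y>n$, $\geq n$, $<n$, $\leq n$. (Your displayed identity $\beta(y)-\beta(x)=\beta(y)-\beta(y-k)+k$ is garbled --- the correct chain is $x<y-k \Rightarrow \beta(x)<\beta(y-k)=\beta(y)-k \Rightarrow \beta(y)-\beta(x)>k$ --- but the intended argument is exactly the one the paper uses.)
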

 \begin{proof}
  Let $\alpha,\beta \in \Hmot_{+}(\bbS^1)$. If $\alpha \neq \id_{\bbS^1}$ has a fixed point, then by Lemma \ref{bsinq} (iii) we have $\ell^{\sharp}(\alpha) = [(0,1)]$, and by Lemma \ref{dirinv} we have that $\beta\alpha\beta^{-1}$ has a fixed point as well, implying that $\ell^{\sharp}(\beta\alpha\beta^{-1}) = \ell^{\sharp}(\alpha) = [(0,1)]$. Assume, then, that $\alpha$ is of forward type. We are going to show that
  \begin{gather*}
   \ell(\alpha) < n \implies \ell(\beta\alpha\beta^{-1}) < n, \quad \ell(\alpha) \leq n \implies \ell(\beta\alpha\beta^{-1}) \leq n, \\
   \ell(\alpha) > n \implies \ell(\beta\alpha\beta^{-1}) > n \quad\text{and}\quad \ell(\alpha) \geq n \implies \ell(\beta\alpha\beta^{-1}) \geq n.
  \end{gather*}
  
  If $\ell(\alpha) < n$, suppose that $\ell(\beta\alpha\beta^{-1}) \geq n$; i.e., that there is $x\in\R$ such that
  \[ \beta \alpha \beta^{-1}(x) \geq x + n. \]
  Since $\beta^{-1}$ is increasing and commutes with $\mathcal{T}_n$, we have $\alpha(\beta^{-1}(x)) \geq \beta^{-1}(x) + n$, which implies that there is $y\in \R$ for which $\alpha(y) - y \geq n$, leading to contradiction. The case for when $\ell(\alpha) \leq n$ is similar.

  If $\ell(\alpha) \geq n$, suppose that $\ell(\beta\alpha\beta^{-1}) < n$; i.e., that for every $x\in\R$ we have
  \[ \beta \alpha \beta^{-1}(x) < x + n. \]
  Since $\beta^{-1}$ is increasing and commutes with $\mathcal{T}_n$, we have $\alpha(\beta^{-1}(x)) < \beta^{-1}(x) + n$, which implies that for every $y\in \R$ we have $\alpha(y) - y < n$, leading to contradiction. The same follows if $\ell(\alpha) > n$, and the case for when $\alpha$ is of backward type goes analogously, concluding the proof.
 \end{proof}

\section{Conjugacy classes of \texorpdfstring{$\SLt_2(\R)$}{SL\~{}(2,R)}}\label{secsltt}
 Write $I := \Cl(\PSL_2(\R))$, and for each $i\in I$, choose and fix a representative element $\varphi_i \in \PSL_2(\R)$, so that $\Cl(\PSL_2(\R)) = \{[\varphi_i] ~|~ i\in I\}$. By \eqref{gtilde}, the canonical liftings $\widetilde{\varphi}_i$ of $\varphi_i$ are in $\SLt_2(\R)$. Together with the center, these are sufficient to describe $\Cl(\SLt_2(\R))$.

 \begin{prop}\label{nvpi}
  For every $\alpha \in \SLt_2(\R)$, there is a unique $i\in I$ (as defined above) and a unique $k\in\Z$ for which $\alpha \sim \mathcal{T}_{k}\, \widetilde{\varphi}_i$.
 \end{prop}
 \begin{proof}
  Let $\alpha\in \SLt_2(\R)$, and take $\varphi_i \in \PSL_2(\R)$ as defined above such that $\overline{\alpha} = p \varphi_{i} p^{-1}$, for some $p\in\PSL_2(\R)$. Lifting to $\SLt_2(\R)$, by \eqref{gtilde} we have that $\alpha = \widetilde{p}\, \mathcal{T}_k \widetilde{\varphi}_i\, \widetilde{p}^{-1}$ for some $k\in \Z$. Since $\mathcal{T}_k\, \widetilde{\varphi}_i \sim \mathcal{T}_{\ell}\, \widetilde{\varphi}_j$ implies $i=j$, all we need is to show that $k=\ell$.
  
  Let $\beta\in \SLt_2(\R)$ be such that $\mathcal{T}_{k-\ell}\, \widetilde{\varphi}_i = \beta \widetilde{\varphi}_i \beta^{-1}$. As $\mathcal{T}_{k-\ell}$ is central in $\SLt_2(\R)$, for every $n\geq 1$, we have
  \[ \mathcal{T}_{n(k-\ell)} = [\beta,\widetilde{\varphi}_i^n]. \]
  However, by Lemma \ref{bsinq} (iv), we have $n(k-\ell) = \ell([\beta,\widetilde{\varphi}_i^n]) < 2$ for every $n\geq 1$. Therefore, $k=\ell$.
 \end{proof}

 \begin{rem}[Torsor structure on conjugacy classes]
  Consider the exact sequence representing a central extension $E$ of a group $G$:
  \begin{equation*}
   \begin{tikzcd}
    1\arrow[r] & Z\arrow[r] & E\arrow[r, "\phi"] & G\arrow[r] & 1.
   \end{tikzcd}
  \end{equation*}
  Proposition \ref{nvpi} shows that for $Z = \Z$, $E = \SLt_2(\R)$, $G = \PSL_2(\R)$, the sequence above has the the following property:
  
  \begin{center}
   \textit{$\Cl(E)$ is a $Z$-torsor over $\Cl(G)$ along the induced map $\phi^{\Cl}: \Cl(E) \twoheadrightarrow \Cl(G)$.}
  \end{center}
  This means that, if $pq = qp$ in $G$, then $\widetilde{p}\widetilde{q} = \widetilde{q}\widetilde{p}$ for any liftings $\widetilde{p}$, $\widetilde{q} \in E$. Indeed, since $\widetilde{(pq)} = \widetilde{(qp)}$, we have $\widetilde{(pq)} = \mathcal{T}_{n}\widetilde{p}\widetilde{q}$ and $\widetilde{(qp)} = \mathcal{T}_{k}\widetilde{q}\widetilde{p}$ for some $k,n\in\Z$, and so $\mathcal{T}_{n}\widetilde{q} \sim \mathcal{T}_{k}\widetilde{q}$. By Proposition \ref{nvpi}, $n=k$, and thus $\widetilde{p}\widetilde{q} = \widetilde{q}\widetilde{p}$.
  
  Other group extensions of interest that satisfy this property include:
  \begin{itemize}
   \item The universal covering of real symplectic groups $\widetilde{Sp}_{2n}(\R) \twoheadrightarrow \mathrm{Sp}_{2n}(\R)$ (cf. Proposition 2.3 (i) of Barge--Ghys \cite{barghy92});\smallskip
   
   \item The universal covering $\Hmot_0(\mathbb{T}^n) \twoheadrightarrow \Hmo_0(\mathbb{T}^n)$, of the identity component of the self-homeomorphisms group of the $n$-torus (cf. Remarque 2.12, p. 22 of Herman \cite{her79}).\smallskip
  \end{itemize}
  As a non-example, the classical double cover $\mathrm{SU}(2) \to \mathrm{SO}(3)$ does not satisfy this property.
 \end{rem}

 \begin{thm}\label{finale}
  In the notation of Subsection \ref{introtbl}, the conjugacy classes of $\SLt_2(\R)$ are described by the following table, with each $\vartheta \in (0,\pi)$, $\lambda \in \R_{>1}$, and $n\in \Z_{\geq 1}$ denoting a different conjugacy class:
  \noindent
  \begin{center}
   \renewcommand{\arraystretch}{1.5}
   \begin{tabular}{|c|c|}
    \hline
    \textnormal{\textbf{Conjugacy class}} & \textnormal{$\mathbf{\ell^{\sharp}}$} \\
    \hline
    $[\id_{\R}]$ & $[0]$ \\
    \hline
    $[\widetilde{\mathrm{u}}_1]$, $[\widetilde{\mathrm{u}}_{-1}]$, $[\widetilde{\mathrm{a}}_{\lambda}]$, $[\widetilde{\rho}_{\vartheta}]$, $[\mathcal{T}_{-1}\,\widetilde{\rho}_{\vartheta}]$ & $[(0,1)]$ \\
    \hline
    $[\mathcal{T}_{n}]$, $[\mathcal{T}_{-n}]$, $[\mathcal{T}_{n}\,\widetilde{\mathrm{u}}_1]$, $[\mathcal{T}_{-n}\, \widetilde{\mathrm{u}}_{-1}]$ & $[n]$ \\
    \hline
    $[\mathcal{T}_{-n}\,\widetilde{\mathrm{u}}_1]$, $[\mathcal{T}_{n}\,\widetilde{\mathrm{u}}_{-1}]$, $[\mathcal{T}_{n}\, \widetilde{\mathrm{a}}_{\lambda}]$ & \multirow{2}{*}{$[(n,n+1)]$} \\
    $[\mathcal{T}_{-n}\, \widetilde{\mathrm{a}}_{\lambda}]$, $[\mathcal{T}_{n}\,\widetilde{\rho}_{\vartheta}]$, $[\mathcal{T}_{-1-n}\,\widetilde{\rho}_{\vartheta}]$ & \\
    \hline
   \end{tabular}
  \end{center}
 \end{thm}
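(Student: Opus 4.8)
The plan is to feed the explicit classification of $\Cl\big(\PSL_2(\R)\big)$ from Lemma~\ref{cnjpsl} into the torsor description of $\Cl\big(\SLt_2(\R)\big)$ supplied by Corollary~\ref{nvpi}, and then to compute $\ell^{\sharp}$ by hand on each canonical lifting.

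\textbf{Step 1 (enumeration of the classes).} Recall that $\SLt_2(\R)\twoheadrightarrow\PSL_2(\R)$ is a CP-extension (Theorem~\ref{hmoiscp} together with Lemma~\ref{zpbsc}~(iii)), with $Z\big(\SLt_2(\R)\big)=\langle\mathcal{T}_1\rangle$ by Remark~\ref{trivcent} and Lemma~\ref{auttcent}. Using as representatives of $\Cl\big(\PSL_2(\R)\big)$ the elements $\overline{\mathrm{Id}}$, $\mathrm{u}_1$, $\mathrm{u}_{-1}$, $\mathrm{a}_{\lambda}\ (\lambda>1)$, $\rho_{\vartheta}\ (0<\vartheta<\pi)$ produced by Lemma~\ref{cnjpsl} (with the uniqueness of $\lambda$ and of $\vartheta$ asserted there), Corollary~\ref{nvpi} shows that every conjugacy class of $\SLt_2(\R)$ is represented by \emph{exactly one} element of the list
\[
 \mathcal{T}_k,\qquad \mathcal{T}_k\,\widetilde{\mathrm{u}}_1,\qquad \mathcal{T}_k\,\widetilde{\mathrm{u}}_{-1},\qquad \mathcal{T}_k\,\widetilde{\mathrm{a}}_{\lambda},\qquad \mathcal{T}_k\,\widetilde{\rho}_{\vartheta}\qquad(k\in\Z),
\]
where $\widetilde{\overline{\mathrm{Id}}}=\id_{\R}$, so that $\mathcal{T}_k\widetilde{\overline{\mathrm{Id}}}=\mathcal{T}_k$. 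This already gives the ``each $\vartheta,\lambda,n$ a distinct conjugacy class'' part of the statement, and reduces the task to computing $\ell^{\sharp}$ on each representative; by Proposition~\ref{lcinv} the function $\ell^{\sharp}$ is constant on conjugacy classes, and by \eqref{dftransl} one has $\mathcal{T}_k\widetilde{\varphi}(x)-x=\big(\widetilde{\varphi}(x)-x\big)+k$ for all $\widetilde{\varphi}$.

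\textbf{Step 2 (the canonical liftings).} I would read off the features of $\widetilde{\rho}_{\vartheta}$, $\widetilde{\mathrm{u}}_{\pm1}$, $\widetilde{\mathrm{a}}_{\lambda}$ that matter from the projective action $\PSL_2(\R)\curvearrowright\bbS^1$ of Section~\ref{secsltt}. In the affine chart $t=\cot(\pi x)$ of $\bbS^1=\R/\Z$ — the identification fixed there, under which $\rho_{\vartheta}$ acts by $\overline{x}\mapsto\overline{x+\vartheta/\pi}$ and $\overline{0}$ corresponds to the line $\R\cdot(1,0)^{\mathrm T}$ — the Möbius action reads $\mathrm{u}_{\pm1}\colon t\mapsto t\pm1$ and $\mathrm{a}_{\lambda}\colon t\mapsto\lambda^{2}t$; since $x\mapsto\cot(\pi x)$ is a decreasing homeomorphism $(0,1)\to\R$, this yields: (a) $\widetilde{\rho}_{\vartheta}=\mathcal{T}_{\vartheta/\pi}$ with $\vartheta/\pi\in(0,1)$ (the canonical lift of a rotation is a translation); (b) $\mathrm{u}_1$ and $\mathrm{u}_{-1}$ fix $\overline{0}$, so $\widetilde{\mathrm{u}}_{\pm1}$ fix $0$, whence $0<\ell(\widetilde{\mathrm{u}}_{\pm1})<1$ (Lemma~\ref{bsinq}~(iii), together with $\widetilde{\mathrm{u}}_{\pm1}\neq\id_{\R}$), and $\mathrm{u}_1$ moves every point of $(0,1)$ strictly downward, so $\widetilde{\mathrm{u}}_1(x)\le x$ for all $x$ with equality exactly on $\Z$ (semi-backward direction type), while symmetrically $\widetilde{\mathrm{u}}_{-1}(x)\ge x$ (semi-forward); (c) $\mathrm{a}_{\lambda}$ fixes $\overline{0}$ and $\overline{1/2}$, so $\widetilde{\mathrm{a}}_{\lambda}$ fixes $\tfrac{1}{2}\Z$ and $0<\ell(\widetilde{\mathrm{a}}_{\lambda})<1$, and for $\lambda>1$ it moves $(0,\tfrac{1}{2})$ downward and $(\tfrac{1}{2},1)$ upward, so $\widetilde{\mathrm{a}}_{\lambda}(x)-x<0$ on $(0,\tfrac{1}{2})$ and $>0$ on $(\tfrac{1}{2},1)$ (alternating direction type).

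\textbf{Step 3 (the $\ell^{\sharp}$ values and the table).} For the central classes, $\ell(\mathcal{T}_k)=|k|$, so $\ell^{\sharp}([\mathcal{T}_0])=[0]$ and $\ell^{\sharp}([\mathcal{T}_{\pm n}])=[n]$. For the remaining $\mathcal{T}_k\widetilde{\varphi}$, Lemma~\ref{bsinq}~(ii) gives $\ell(\mathcal{T}_k\widetilde{\varphi})<|k|+1$, while the sign behaviour from Step~2 evaluates $\sup_x\big|(\widetilde{\varphi}(x)-x)+k\big|$ exactly: one gets $\ell(\mathcal{T}_k\widetilde{\mathrm{u}}_1)=k$ for $k\ge1$ and $\ell(\mathcal{T}_k\widetilde{\mathrm{u}}_1)=|k|+\ell(\widetilde{\mathrm{u}}_1)\in(|k|,|k|+1)$ for $k\le-1$ (and symmetrically for $\widetilde{\mathrm{u}}_{-1}$, with the two regimes swapped); $\ell(\mathcal{T}_k\widetilde{\mathrm{a}}_{\lambda})\in(|k|,|k|+1)$ for all $k\neq0$ (since $\widetilde{\mathrm{a}}_{\lambda}(x)-x$ takes both signs and has absolute value $<1$); and $\ell(\mathcal{T}_k\widetilde{\rho}_{\vartheta})=|k+\vartheta/\pi|$, which lies in $(0,1)$ for $k\in\{0,-1\}$ and in $(n,n+1)$ for $k=n\ge1$ or $k=-1-n$ with $n\ge1$. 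Applying $(\cdot)^{\sharp}$ and grouping the classes by the resulting value of $\ell^{\sharp}$ reproduces precisely the four rows of the table (identifying $n=|k|$, resp.\ $n=-1-k$ for the elliptic classes with $k\le-2$), which completes the proof.

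\textbf{Anticipated main obstacle.} Step~3 is essentially bookkeeping with the triangle inequality of Lemma~\ref{bsinq} and the $1$-periodicity of $x\mapsto\widetilde{\varphi}(x)-x$. The real content is Step~2: pinning down the direction types of the non-elliptic canonical liftings — in particular that the parabolic lifts $\widetilde{\mathrm{u}}_{\pm1}$ are of semi-forward/semi-backward type whereas the hyperbolic lifts $\widetilde{\mathrm{a}}_{\lambda}$ alternate in sign — which forces one to make the embedding $\PSL_2(\R)\hookrightarrow\Hmo_{+}(\bbS^1)$ fully explicit relative to the fixed identification $\bbS^1\cong\R/\Z$, and then to combine this with the uniform bound $\ell<1$ for elements possessing a fixed point (Lemma~\ref{bsinq}~(iii)).
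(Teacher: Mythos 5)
Your proposal is correct and follows essentially the same route as the paper: enumerate the classes via Corollary \ref{nvpi} applied to the representatives of Lemma \ref{cnjpsl}, then evaluate $\ell^{\sharp}$ on each $\mathcal{T}_k\widetilde{\varphi}$ using $\widetilde{\rho}_{\vartheta}=\mathcal{T}_{\vartheta/\pi}$, Lemma \ref{bsinq} (iii), and the direction types of $\widetilde{\mathrm{u}}_{\pm1}$ and $\widetilde{\mathrm{a}}_{\lambda}$. Your Step 2 merely makes explicit (via the chart $t=\cot(\pi x)$) the computation that the paper dispatches with ``one verifies,'' and your sign conventions there agree with the paper's.
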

 \begin{proof}
  By Proposition \ref{nvpi}, the conjugacy classes listed above are all inequivalent and represent all of $\SLt_2(\R)$. It only remains for us to check the value of $\ell^{\sharp}$ of each class, which we know by Proposition \ref{lcinv} to be conjugacy invariant.
  
  Clearly, $\ell^{\sharp}(\mathrm{id}_{\R}) = [0]$ and $\ell^{\sharp}(\mathcal{T}_n) = \ell^{\sharp}(\mathcal{T}_{-n}) = [n]$. Furthermore, since $\widetilde{\rho}_{\vartheta} = \mathcal{T}_{\vartheta/\pi}$, for every $k\in \Z_{\geq 0}$ we have $\ell^{\sharp}(\mathcal{T}_{k}\,\widetilde{\rho}_{\vartheta}) = \ell^{\sharp}(\mathcal{T}_{-1-k}\,\widetilde{\rho}_{\vartheta}) = [(k,k+1)]$.
  Since $\widetilde{\mathrm{u}}_{1}$, $\widetilde{\mathrm{u}}_{-1}$ and $\widetilde{\mathrm{a}}_{\lambda}$ have fixed points, by Lemma \ref{bsinq} (iii) it holds that $\ell^{\sharp}(\widetilde{\mathrm{u}}_1) = \ell^{\sharp}(\widetilde{\mathrm{u}}_{-1}) = \ell^{\sharp}(\widetilde{\mathrm{a}}_{\lambda}) = [(0,1)]$. From the definition of the action $\PSL_2(\R) \curvearrowright \bbS^1$, one verifies that $\widetilde{\mathrm{u}}_{-1}$ is of semi-forward direction type, while $\widetilde{\mathrm{u}}_{1}$ is of semi-backward type. From Lemma \ref{bsinq} (iii), we know that $\ell(\widetilde{\mathrm{u}}_{-1}),\ell(\widetilde{\mathrm{u}}_{1}) < 1$, so it follows that, for every $n\geq 1$,
  \begin{gather*}
   \ell^{\sharp}(\mathcal{T}_{n}\,\widetilde{\mathrm{u}}_{1}) = \ell^{\sharp}(\mathcal{T}_{-n}\,\widetilde{\mathrm{u}}_{-1}) = [n], \\
   \ell^{\sharp}(\mathcal{T}_{-n}\,\widetilde{\mathrm{u}}_{1}) = \ell^{\sharp}(\mathcal{T}_{n}\,\widetilde{\mathrm{u}}_{-1}) = [(n,n+1)].
  \end{gather*}
  Finally, for $\widetilde{\mathrm{a}}_{\lambda}$, since $\ell(\widetilde{\mathrm{a}}_{\lambda}) < 1$ and $\widetilde{\mathrm{a}}_{\lambda}$ is of alternating type, both $\ell^{\sharp}(\mathcal{T}_{n}\,\widetilde{\mathrm{a}}_{\lambda})$ and $\ell^{\sharp}(\mathcal{T}_{-n}\,\widetilde{\mathrm{a}}_{\lambda})$ must equal $[(n,n+1)]$, completing the proof.
 \end{proof}
 
\addtocontents{toc}{\protect\setcounter{tocdepth}{0}}
\section*{Acknowledgements}
 I would like to thank Shinichi Mochizuki and Go Yamashita for having suggested me this topic, as well as for their patient guidance with early versions of this research note. I also thank the anonymous referee for their valuable input on the presentation of this article.
 
\addtocontents{toc}{\protect\setcounter{tocdepth}{1}}

\bibliographystyle{amsplain}
\bibliography{$HOME/Academie/Recherche/_latex/bibliotheca}%
\end{document}